\newcommand{\Z}{{\mathbb Z}}
\newcommand{\Q}{{\mathbb Q}}
\newcommand{\R}{{\mathbb R}}
\newcommand{\C}{{\mathbb C}}
\newcommand{\F}{{\mathcal F}}
\newcommand{\HQ}{{\mathbb H}}
\newcommand{\h}{{\mathcal{H}}}   % quaternion algebra
\newcommand{\bc}{\begin{center}}
\newcommand{\ec}{\end{center}}
\newcommand{\Fe}{{\cal F}}
\newcommand{\SL}{{\rm SL}}
\newcommand{\PSL}{{\rm PSL}}
\newcommand{\PSU}{{\rm PSU}}
\newcommand{\Iso}{{\rm ISO}}
\newcommand{\tr}{{\rm tr}}
\newcommand{\ISO}{{\rm ISO}}
\renewcommand{\O}{\mathcal{O}}
\newcommand{\qed}{\enspace\vrule  height6pt  width4pt  depth2pt}
\newenvironment{proof}{\par\noindent{\bf Proof.}}{$\qed$\par\bigskip}
\newtheorem{theorem}{Theorem}[section]
\newtheorem{definition}[theorem]{Definition}
\newtheorem{lemma}[theorem]{Lemma}
\newtheorem{corollary}[theorem]{Corollary}
\newtheorem{proposition}[theorem]{Proposition}
\newtheorem{remark}[theorem]{Remark}
\begin{document}

\title{Dirichlet-Ford domains and Double Dirichlet domains\thanks{Mathematics subject Classification Primary
[$20H10$,$30F40$]; Secondary [$51M10$, $57M60$].
Keywords and phrases: Hyperbolic space, Kleinian groups, Fundamental domains.
\newline The first author is supported  by Onderzoeksraad of
Vrije Universiteit Brussel and Fonds voor 
Wetenschappelijk Onderzoek (Flanders),  the second by  FAPESP-Brazil (while visiting the Vrije Universiteit Brussel),  the third by Fonds voor
Wetenschappelijk Onderzoek (Flanders)-Belgium (while visiting Universit\"at Bielefeld) and   the fourth by FAPESP and CNPq-Brazil, and the fifth  by FAPESP (Funda\c c\~ao  de Amparo \`a
Pesquisa do Estado de S\~ao Paulo), Proc. 2014/06325-1.
 }}

\author{E. Jespers \and S. O. Juriaans \and  A. Kiefer \and A. De A. E Silva \and A. C. Souza Filho}
\date{}

\maketitle

\begin{abstract}
We continue investigations started by Lakeland on Fuchsian and Kleinian groups which have  a Dirichlet fundamental domain that also is a Ford domain in the upper half-space model of hyperbolic $2$- and $3$-space, or which have a Dirichlet domain with multiple centers. Such domains are called DF-domains and Double Dirichlet domains respectively. Making use of  earlier obtained concrete formulas for the bisectors defining the Dirichlet domain  of center $i \in \HQ^2$ or center $j \in \HQ^3$, we obtain a simple condition on the matrix entries of the side-pairing transformations of the fundamental domain of a Fuchsian or Kleinian group to be a DF-domain. Using the same methods, we also complement  a result of Lakeland stating that a cofinite Fuchsian group has a DF domain (or a Dirichlet domain with multiple centers) if and only if it is an index $2$ subgroup of the discrete group G of reflections in a hyperbolic polygon.
 \end{abstract}

\section{Introduction}

Fundamental domains in hyperbolic spaces, or spaces of constant curvature in general, have been studied for a long time. Together with Poincar\'e's Polyhedron Theorem, they are often used to find presentations of discrete groups. Fundamental domains also are of use in the construction of discrete groups. Special attention goes to fundamental domains in hyperbolic $2$- and $3$-space, as they are strongly related to discrete subgroups of $\PSL_2(\R)$ and $\PSL_2(\C)$. 
 
A major difficulty one encounters is the effective construction of such a  domain.
The two most known and used constructions in hyperbolic space are the Ford and Dirichlet fundamental domains. The Ford fundamental domain of a group $\Gamma$ is defined in terms of the isometric spheres of the elements of $\Gamma$. The Dirichlet domain of $\Gamma$ is based on the bisectors of some chosen center and its images by $\Gamma$. It is well-known that in the ball model of hyperbolic $2$- or $3$-space, the Dirichlet fundamental domain and the Ford fundamental domain of some discrete group $\Gamma$ are the same (see for example \cite[Theorem 9.5.2]{beardon}). In the upper half-space model however, this is, in general, not the case. Hence an interesting topic is to study when the Dirichlet and the Ford fundamental domain coincide also in the upper half-space model. This is what we call a DF domain, i.e. a fundamental domain in $\HQ^{2}$ or $\HQ^{3}$ that is a Dirichlet and a Ford domain at the same time.

One major problem in the construction of a Dirichlet fundamental domain is the choice of an adequate center. In general changing the center, changes the shape of the fundamental domain completely, as is nicely shown by Martin Deraux's animation \cite{Martin}. This is different for the hyperbolic reflection groups. Their fundamental domain is canonical and the choice of the center plays no role at all. The fundamental domain is the same for every chosen center, see \cite[Exercise 7.1.1]{ratcliffe}. So one may wonder which other Dirichlet fundamental domains have multiple centers. We call them double Dirichlet domains. In some sense characterizing a group acting discontinuously on hyperbolic space by having a double Dirichlet domain comes down to study `how close' the group is to a hyperbolic reflection group.

One of the most common examples of a discontinuous action on hyperbolic space is the action of $\PSL_2(\Z)$ on hyperbolic $2$-space $\HQ^2$. The probably best known fundamental domain for this action is the triangle in the upper half-plane model with vertices $\infty$, $\frac{1}{2}+\frac{\sqrt{3}}{2}i$ and $-\frac{1}{2}+\frac{\sqrt{3}}{2}i$. This is in fact both a Ford domain and a Dirichlet domain with center $ti$ for every $t>0$. So this is a first example of a DF-domain and a double Dirichlet domain.

In this paper we continue the investigations initiated by Lakeland in \cite{lakeland} on DF-domains and double Dirichlet domains. The main reason in \cite{lakeland} to study these domains is answering a  question raised by Agol-Belolipetsky-Storm-Whyte in \cite{agol}: the existence of  a maximal arithmetic hyperbolic reflection group which is not congruence. The author constructs a non-congruence arithmetic group $\Gamma_{\textrm{ref}}$ and, by using the theory of DF-domains, he proves that $\Gamma_{\textrm{ref}}$ is a maximal reflection group.

The main theorem of \cite[Theorem 5.3]{lakeland} states that a finitely generated, finite coarea Fuchsian group $\Gamma$ admits a DF-domain  
%$\mathcal{F}$, 
if and only if $\Gamma$ is an index 2 subgroup of a  reflection group. 
It  also is proved that a Kleinian group $\Gamma$ has a generating set consisting of elements whose traces are real (\cite[Theorem 6.3]{lakeland}.) 
We give a new and independent criterion for the result of \cite{lakeland} that also applies to Kleinian groups. Note that all the groups we are working with are non-cocompact. See also Remark~\ref{non-cocompact}.
Our criterion is of algebraic nature and  easily can be  checked once a set of side-pairing transformations is given:

\begin{theorem}
\label{lakeland} 
Let $\Gamma$ be a non-cocompact cofinite discrete subgroup of $\PSL_2(\C )$, acting on $\HQ^2$, respectively $\HQ^3$, and $P_0=i$, respectively $P_0=j$. Suppose that the stabilizer of $P_0$ in $\Gamma$ is trivial.  Then, $\Gamma$ admits a DF domain $\Fe$ with center $P_0$ if and only if for every side-pairing transformation $\gamma =\begin{pmatrix}
a & b\\
c & d
\end{pmatrix}\in \PSL_2(\C )$ of $\Fe$ we have that $d={\overline{a}}$.
Moreover, if $\Gamma$ is a cofinite Fuchsian group, then  ${\tilde{\Gamma}} =\langle \sigma , \Gamma \rangle$ is a reflection group and ${\hat{\Gamma}} =\langle \tau, \Gamma \rangle$ is 
a Coxeter group, where $\sigma$ is the reflection in the imaginary axis and $\tau$ is the linear operator represented by the matrix $\begin{pmatrix} i &0 \\ 0 & -i \end{pmatrix}$, and both groups  contain $\Gamma $ as a subgroup of index two.
\end{theorem}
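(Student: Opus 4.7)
The plan is to compare, for each side-pairing $\gamma=\begin{pmatrix}a & b\\ c & d\end{pmatrix}\in\Gamma$ of $\mathcal{F}$ (necessarily with $c\neq 0$, since $\gamma(\infty)\neq\infty$), the Dirichlet side of $\mathcal{F}$ attached to $\gamma$ with the Ford side of $\gamma$, by means of the explicit formula for the bisector of $P_{0}$ and $\gamma^{-1}(P_{0})$ derived earlier in the paper.

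For the first part I would start from the Poincar\'e extension, which gives
\[
\gamma^{-1}(j)=\frac{-b\overline a - d\overline c}{|a|^{2}+|c|^{2}}+\frac{1}{|a|^{2}+|c|^{2}}\,j
\]
(and the same formula with real entries handles the $\HQ^{2}$ case $P_{0}=i$). Writing $z_{2}+t_{2}j=\gamma^{-1}(P_{0})$ and expanding $\cosh d(X,Y)=1+\|X-Y\|^{2}/(2t_{X}t_{Y})$ shows that the bisector of $P_{0}$ and $\gamma^{-1}(P_{0})$ is the Euclidean hemisphere centred at $-z_{2}/(t_{2}-1)$ on the boundary; on the other hand the isometric sphere of $\gamma$ is the Euclidean hemisphere of centre $-d/c$ and radius $1/|c|$. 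Equating the two Euclidean centres and clearing denominators reduces, after using $ad-bc=1$, to the identity $\overline a(ad-bc)=d$, i.e.\ exactly $d=\overline a$. A direct verification using $bc=|a|^{2}-1$ then shows that, under this same condition, the two radii automatically coincide (one obtains $|c|^{2}|b+\overline c|^{2}=(1-|a|^{2}-|c|^{2})^{2}$, which is precisely the needed equality). Doing this for every side-pairing of $\mathcal{F}$ yields the biconditional in the first part.

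For the moreover assertion, $\Gamma\subset\PSL_{2}(\R)$ and the DF-condition becomes $d=a$, so every side-pairing has the form $\gamma=\begin{pmatrix}a & b\\ c & a\end{pmatrix}$. A short matrix computation using $a^{2}-bc=1$ then gives $\sigma\gamma\sigma=\gamma^{-1}$ and $\tau\gamma\tau=\gamma^{-1}$ in $\PSL_{2}(\C)$ (noting $\tau^{2}=-I=I$ in $\PSL_{2}(\C)$), so both $\sigma$ and $\tau$ normalise $\Gamma$; since neither $\sigma$ nor $\tau$ lies in $\PSL_{2}(\R)\supset\Gamma$, each of $\tilde{\Gamma}=\Gamma\sqcup\sigma\Gamma$ and $\hat{\Gamma}=\Gamma\sqcup\tau\Gamma$ contains $\Gamma$ as an index-two normal subgroup. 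Equivalently, $(\sigma\gamma)^{2}=(\tau\gamma)^{2}=1$ for every side-pairing $\gamma$, so the generating sets $\{\sigma,\sigma\gamma_{1},\dots,\sigma\gamma_{n}\}$ and $\{\tau,\tau\gamma_{1},\dots,\tau\gamma_{n}\}$ consist entirely of involutions; each $\sigma\gamma_{i}$ is an orientation-reversing involution of $\HQ^{2}$ and therefore a reflection in a geodesic, which identifies $\tilde{\Gamma}$ as a reflection group. The Coxeter presentations of both extensions then follow from Poincar\'e's polyhedron theorem applied to the half of $\mathcal{F}$ cut out by the imaginary axis, which is the fixed locus of $\sigma$ and also of the restriction of $\tau$ to $\HQ^{2}$.

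The main obstacle I expect is this last step: one has to verify that the imaginary axis meets $\mathcal{F}$ in a single connected arc, so that exactly one of its two halves is a connected fundamental polygon for the extended group, and that the vertex-cycle conditions of Poincar\'e's theorem applied to this halved polygon yield only Coxeter-type relations $(r_{i}r_{j})^{m_{ij}}=1$. Once the bisector formula from the paper is available, everything else reduces to a direct calculation.
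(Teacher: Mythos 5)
Your computation for the biconditional is the same route the paper takes (equating the explicit bisector $\Sigma_{\gamma}$ with $\Iso_{\gamma}$ and reading off $d=\overline{a}$ from the centers, then checking the radii via $bc=|a|^{2}-1$), but your opening parenthetical ``necessarily with $c\neq 0$, since $\gamma(\infty)\neq\infty$'' is false and creates a real gap. Since $\Gamma$ is non-cocompact with a cusp at $\infty$, the fundamental domain has vertical sides paired by elements of $\Gamma_{\infty}$, i.e.\ side-pairing transformations with $c=0$; the theorem's condition $d=\overline{a}$ applies to these as well, and both implications need them: in the forward direction one must still derive $|a|=1$ (hence $d=a^{-1}=\overline{a}$) from the DF property, and in the converse one must check that the vertical bisector planes $\Sigma_{\gamma}$ can be taken as the walls of the fundamental polyhedron $\F_{\infty}$ entering the Ford domain. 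A second, smaller gap in the converse: your center formula $-z_{2}/(t_{2}-1)$ divides by zero exactly when $|a|^{2}+|c|^{2}=1$, where $\Sigma_{\gamma}$ degenerates to a vertical plane and cannot equal the hemisphere $\Iso_{\gamma}$. The paper excludes this case by noting that $d=\overline{a}$, $\det\gamma=1$ and $|a|^{2}+|c|^{2}=1$ force $\gamma\in\PSU_2(\C)$, contradicting the assumed triviality of the stabilizer of $P_{0}$; you need some such argument.

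The ``moreover'' part is where the proposal genuinely falls short. Your normalization computation $\sigma\gamma\sigma=\tau\gamma\tau=\gamma^{-1}$ correctly yields $[\tilde{\Gamma}:\Gamma]=[\hat{\Gamma}:\Gamma]=2$, but being generated by involutions (or even by honest reflections in geodesics) does not make $\tilde{\Gamma}$ a reflection group with respect to a polygon, nor $\hat{\Gamma}$ a Coxeter group: one must exhibit a polygon all of whose dihedral angles are submultiples of $\pi$ and whose side-pairings are the reflections in its sides. You explicitly defer this (``the main obstacle I expect''), but it is the substantive content of the step. The paper supplies it in Lemma~\ref{corodfcriterium}: since $a(\gamma_k)=d(\gamma_k)$ forces $\Sigma_{\gamma_k}=\Iso_{\gamma_k}$, the domain $\F$ is symmetric about the imaginary axis; one then takes the half-polygon $P$ bounded by the imaginary axis and the sides with $P_{\gamma_k}\geq 0$, shows every dihedral angle of $P$ is a submultiple of $\pi$ using the angle-cycle theorem for exact fundamental polygons (\cite[Theorem 6.8.7]{ratcliffe}, plus the observation that an angle at the imaginary axis is half of a reflected angle of $\F$ or a right angle), and only then applies Poincar\'e's theorem \cite[Theorem 7.1.3]{ratcliffe}. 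Without this angle verification the claim that $\tilde{\Gamma}$ is a reflection group and $\hat{\Gamma}$ a Coxeter group is not established.
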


%As can be seen, our theorem does not only apply to Fuchsian, but also Kleinian groups. 
For such groups $\Gamma$, as an immediate consequence, one obtains, in the Kleinian case, that the traces of the generating elements  are real (see 
Corollary~\ref{TraceRealC}).
%Corollary~\ref{lakelandcor}). 
Moreover, as an application, we get most of the results on DF and double Dirichlet domains obtained in \cite{lakeland}. Also, some of the proofs of \cite{lakeland} can be simplified using Theorem~\ref{lakeland}.

To prove the above theorem, we make use of concrete formulas given in \cite{algebrapaper}. In \cite{algebrapaper},  the authors develop explicit formulas for the bisectors of $i$ and $\gamma(i)$ for some $\gamma \in \PSL_2(\R)$ not fixing $i$, or $j$ and $\gamma(j)$ for some $\gamma \in \PSL_2(\C)$ not fixing $j$. These bisectors are indeed necessary to determine the Dirichlet fundamental domain in hyperbolic $2$-space $\HQ^2$ and hyperbolic $3$-space $\HQ^3$ respectively.
These fundamental domains are then used to tackle the non-trivial problem of describing units in an order of a non-commutative non-split division algebra or of a $2$-by-$2$ matrix ring over a quadratic imaginary extension of the field $\Q$.
As the  unit groups of some of these orders may be considered as discrete subgroups of $\SL_2(\C)$, fundamental domains and Poincar\'e's Polyhedron Theorem are of potential use to determine these unit groups.
First attempts to this were done by Pita, del R\'io and Ruiz in \cite{PitadelRio2006,PitadelRioRuiz2005}, where the authors use Ford domains to get presentations of some small subgroups of congruence subgroups of Bianchi groups and by Corrales, Jespers, Leal and del R\'io in \cite{jesetall}, where a presentation for the unit group of a ``small'' non-commutative division algebra (a quaternion algebra) is given using Dirichlet domains. 
As an application, one obtains a description of subgroups of finite index in the unit group of an integral group ring of some finite
groups. 
Making use of concrete formulas, the authors obtain in \cite{algebrapaper}
a more general approach than  the isolated cases described above.

The outline of the  paper is as follows. For the sake of completeness, we record in Section~\ref{secback} some fundamentals on hyperbolic geometry, fundamental domains and on  Fuchsian and Kleinian groups. In Section~\ref{seciso}, we recall a result from \cite{algebrapaper} and develop a necessary proposition to prove our main result and its corollaries. In particular, we give conditions for an isometric sphere, in the upper half-space (-plane) model, to be a bisector.
%and show that the relative position of the isometric spheres of an isometry $\gamma$, say,  and its inverse determine whether $\gamma$ is elliptic, parabolic or hyperbolic. Finally, we handle the ideal points of the Bianchi groups. 
In the last section, we consider  DF domains and double Dirichlet domains, prove  Theorem~\ref{lakeland} and show some corollaries.

%*************************Background***************************************************

\section{Background}
\label{secback}

We begin by recalling basic facts on hyperbolic spaces and we fix
notation. Standard references are \cite{beardon, bridson, 
elstrodt, gromov, mac, ratcliffe}. By  $\HQ^2$ and $\HQ^3$ we denote the upper half-space model of hyperbolic $2$- and $3$-space.
As is common, we identify $\HQ^2$ with the subset  $\{x+ri
\in \C \mid x \in \R, r \in \R^{+}\}$ of the complex numbers $\C$ and  $\HQ^{3}$ with the subset $  \{z+rj
\in \h \mid z \in \C, r \in \R^{+}\}$ of  the classical (real) quaternion algebra  $\h=\h(\frac{-1,-1}{\R})$. 
%The ball models $\BQ^{2}$ and $\BQ^{3}$ may be identified   with $\{x+ri\in \C\   | \  x^2+r^2<1\}$ and $\z+rj\in \C +\R j\ | \  |z|^2+r^2<1\}$ respectively. 
Denote by $\mbox{Iso}^{+} (\HQ^{i})$ the group of orientation-preserving isometries of $\ \HQ^{i}$ for $i=2,3$. 
%The respective groups of orientation-preserving isometries are denoted by $\mbox{Iso}^{+}(\HQ^{i})$ and  $\mbox{Iso}^{+}(\BQ^{i})$.
It is well known that $\mbox{Iso}^{+}(\HQ^{2})$ is isomorphic with
$\PSL _2(\R)$ and  that $\mbox{Iso}^{+}(\HQ^{3})$ is isomorphic with
$\PSL _2(\C)$.

%\begin{remark}
Throughout, we will use the notation $\begin{pmatrix} a & b \\ c & d \end{pmatrix}$ both for an element of $\SL_2(\R)$ or $\SL_2(\C)$ as well as for its natural image in $\PSL_2(\R)$ or $\PSL_2(\C)$. Moreover, abusing  notation, we  use the same letter for both the matrix in $\SL_2(\R)$ or $\SL_2(\C)$ and the M\"obius transformation acting on $\HQ^2$ or $\HQ^3$ respectively.
For $\gamma =\begin{pmatrix}
a & b\\
c & d
\end{pmatrix}$ in $\SL_2(\R)$ or $\SL_2(\C )$, we write $a=a(\gamma )$, $b=b(\gamma )$, $c=c(\gamma )$ and $d=d(\gamma )$ when it is necessary to stress the dependence of the entries on the matrix $\gamma$.
%\end{remark} 

We now describe the action of $\PSL_2(\R)$ and $\PSL_2(\C)$ on hyperbolic space. We do this in detail for the $3$-dimensional case, the $2$-dimensional case being done in a similar way. The action of $\PSL_2(\C)$  on  $\HQ^{3}$  is given
by 

\begin{equation*}\begin{pmatrix}
a & b\\
c & d
\end{pmatrix} (P) = (aP+b)(cP+d)^{-1},
\end{equation*} 
where $(aP+b)(cP+d)^{-1}$  is calculated in $\h$. Explicitly, if $P=z+rj$ and $\gamma = \begin{pmatrix}
a & b\\
c & d
\end{pmatrix}$ then 
\begin{equation*}
\gamma (P)=\frac{(az+b)(\overline{c}\overline{z}+\overline{d})+a
\overline{c}r^{2}}{|cz+d|^{2}+|c|^{2}r^{2}} + (\frac{r}{|cz+d|^{2}+|c|^{2}r^{2}})j.
\end{equation*}

The hyperbolic distance $\rho$ in $\HQ^3$ (or $\HQ^2$ respectively)  is determined by  $\cosh \rho(P,P') = 1+\frac{d(P,P')^2}{2rr'},$ where $d$ is the Euclidean distance and $P=z+rj$ and $P'=z'+r'j$
are two elements of $\HQ^3$ (respectively $P=x+ri$ and $P'=x'+r'i$
are two elements of $\HQ^2$). 
%Using the map $\eta_0$ one can compute the hyperbolic distance in the ball models.

Finally, recall that a group $\Gamma$ is said to act discontinuously on a proper metric space $X$ if for every compact subset $K$ of $X$, $K \cap \gamma(K) \neq \emptyset$ for only finitely many $\gamma \in \Gamma$. A well-known theorem states that if $X$ is a proper metric space, then a group $\Gamma$ acts discontinuously on $X$ if and only if $\Gamma$ is a discrete subgroup of $\textrm{Iso}(X)$. For more details on this, see \cite[Theorem 5.3.5]{ratcliffe}. A fundamental domain for a discontinuous group action of a group $\Gamma$ on a metric space $X$, is a closed set $\F \subseteq X$ such that the border of $\F$ has measure $0$, the union of the images of $\F$ under $\Gamma$ is the space $X$ and the images of $\F^{\circ}$, the interior of $\F$, by different elements of $\Gamma$ are pairwise disjoint. We call $\F$ a fundamental polyhedron, if $\F$ is a fundamental domain that is a convex polyhedron. The group $\Gamma$ is said to be cofinite if $\F$ has finite volume. It is well-known that cofiniteness implies geometrical finiteness which, in dimension $2$ and $3$, implies that the $\F$ has finitely many sides. For more details, we refer to \cite{bowditch} and \cite{kapovich}. If $\F$ is a fundemantal polyhedron for a discontinuous group action $\Gamma$, then, for every side $S$ of $\F$, there exists an element $\gamma_S \in \Gamma$ such that $S=\F \cap \gamma_S(\F)$. If for every side $S$ the element $\gamma_{S}$ is the reflection in the hyperplane $\langle S \rangle$, then $\Gamma$ is called a reflection group with respect to $F$. Reflection groups are particular cases of Coxeter 
groups. Formally a Coxeter group is a group $\langle r_1, \ldots, r_n \mid (r_ir_j)^{m_{ij}}=1 \rangle $ with $m_{ii}=1$ and $m_{ij} \geq 2$ for $i \neq j$. Note that $m_{ij}=\infty$ is possible and just means that there is no relation of the form $(r_ir_j)^m$ between $r_i$ and $r_j$. More details on this may be found in \cite[Section 7.1]{ratcliffe}.

In this paper we work with two different constructions of fundamental polyhedra, known as Dirichlet and Ford fundamental domain. We recall their construction and how they can be used to give a presentation for the considered groups, the so called Poincar\'e method (for details see for example \cite{beardon} or \cite{ratcliffe}). Let $\Gamma$ be a discrete subgroup of ${\rm Iso}^+(\HQ^3)$.    Let $\Gamma_j$ be the stabilizer in $\Gamma$ of $j\in \HQ^{3}$ 
and let
$\Fe_j$ be a convex fundamental polyhedron of $\Gamma_j$.
Put $D_{\gamma}(j)=\{u\in \HQ^{3}\mid \rho
(u,j)\leq \rho (u,\gamma (j))\}$ and set $\tilde{\F} = \bigcap_{\gamma\in
\Gamma \setminus \Gamma_j}D_{\gamma}(j)$.
The border $\partial D_{\gamma}(j) = \{u\in \HQ^3\mid \rho
(u,j)= \rho (u,\gamma (j))\}$ is the hyperbolic bisector of the geodesic linking $j$ to $\gamma(j)$. 
This is called a Poincar\'e bisector. Note that $\tilde{\F}$ is stable under the action of $\Gamma_j$. Moreover $\tilde{\F}$ is a convex polyhedron such that $\bigcup_{n} \gamma_n(\tilde{\F}) = \HQ^3$, where $\gamma_n$ are the coset representatives of $\Gamma_j$ in $\Gamma$. By \cite[Theorem 9.6.1]{beardon}, the set
$$\Fe=\F_j \cap \tilde{\F} = \Fe_j\cap
\left(\bigcap\limits_{\gamma\in
\Gamma \setminus \Gamma_j}D_{\gamma}(j)\right)$$ 
is a fundamental domain of $\Gamma$, which we call the Dirichlet fundamental domain with
center $j$. Moreover, it may be shown that $\F$ is a polyhedron and if  $\Gamma$ is   geometrically finite then 
 a finite set of generators for $\Gamma$ consists of the elements $\gamma \in \Gamma$ such that $\Fe \cap \gamma (\Fe )$ 
 is a side of the polyhedron  together with $\Gamma_j$, i.e. $\Gamma =\langle \Gamma_j, \gamma \  | \  \gamma (\Fe)
 \cap \Fe \textrm{ is a side }\rangle$ (see \cite[Theorem 6.8.3]{ratcliffe}). Let us denote the bisector $\partial D_{\gamma^{-1}}(j)$ of the geodesic linking $j$ to $\gamma^{-1}(j)$ by $\Sigma_{\gamma}$. It is easy to compute that $\gamma(\Sigma_{\gamma})=\Sigma_{\gamma^{-1}}$. From this it follows that $\F \cap \gamma(\F) \subseteq \Sigma_{\gamma^{-1}}$.
Note that the same construction can be done in $\HQ^2$, replacing the point $j \in \HQ^3$ by $i \in \HQ^2$.
 
Let $\Gamma$ be a discrete subgroup of $\PSL_2(\C)$ and denote by $\Gamma_{\infty}$ the stabilizer in $\Gamma$ of the point $\infty$. Denote a convex fundamental polyhedron of $\Gamma_{\infty}$ by $\F_{\infty}$. For $\gamma = \begin{pmatrix} a & b \\ c & d  
\end{pmatrix} \in \Gamma \setminus \Gamma_{\infty}$, denote the isometric sphere of $\gamma$ by  $\ISO_{\gamma}$.  Note that these are the points $P\in \HQ^3$ such that $\vert cP+d \vert^2 =1$. Denote the set
$\lbrace P\in \HQ^{3} \mid \vert cP+d\vert^2 \geq 1 \rbrace$ by $\ISO_{\gamma}^{\geq}$. By the same reasoning as above, if $\Gamma_{\infty}$ contains a parabolic element then
$$\F = \F_{\infty} \cap 
\left(\bigcap\limits_{\gamma\in\Gamma \setminus \Gamma_{\infty}} \ISO_{\gamma}^{\geq}\right)$$ 
is a fundamental domain of $\Gamma$ called the Ford fundamental domain of $\Gamma$. Again, if $\F$ is a polyhedron and if  $\Gamma$ is  geometrically finite then $\Gamma =\langle \Gamma_{\infty}, \gamma \  | \  \gamma (\Fe) \cap \Fe \textrm{ is a side }\rangle$. And also in this case one can easily show that, for every $\gamma \in \Gamma \setminus \Gamma_{\infty}$, $\gamma(\ISO_{\gamma})=\ISO_{\gamma^{-1}}$ and $\F \cap \gamma(\F) \subseteq \ISO_{\gamma^{-1}}$. Again the same construction is possible in $\HQ^2$.
\begin{remark}\label{non-cocompact}
If we talk about Ford domains, we implicitly assume the discrete subgroup $\Gamma$ to have a parabolic element fixing the point $\infty$.
\end{remark}

%*****************************

%**********************Isometric Spheres*****************************************

\section{Poincar\'e bisectors and isometric spheres
\label{seciso}}

For completeness' sake,  we first recall in this section the authors' result from \cite{algebrapaper} that is needed to prove our main result. Based on this result we prove a proposition that will be used later. As before, we develop the theory in dimension $3$, but everything can be applied to dimension $2$ as well.
%The origin of $\BQ^3$ is denoted $0$.  
Let $\gamma =
\left(
\begin{array}{ll}
a & b \\ c & d
\end{array}
\right)\in \PSL _2(\C ) \setminus  \PSU_2(\C)$.
% and  $\Psi (\gamma ) =
%\left(
%\begin{array}{ll}
%A & C^{\prime} \\ C & A^{\prime}
%\end{array}
%\right)$. 
Recall that $\gamma\in \PSU_2(\C )$ if and only if $\gamma (j)=j$ %or, equivalently, $\Psi (\gamma )(0)=0$ 
(see \cite{beardon, elstrodt}). 
As the Poincar\'e bisector can only exist if $\gamma \not \in \Gamma_j$, the case $\gamma \in \PSU_2(\C)$ is excluded in the following. 
 In the ball model of the hyperbolic space, it is well-known that the isometric sphere of $\gamma$ equals the bisector of the geodesic segment linking $0$ and its image by $\gamma^{-1}$. We will not go into  more details on this but refer the interested reader to \cite[Section 9.5]{beardon} for dimension $2$. In \cite[Theorem 3.1]{algebrapaper}, the authors give an independent proof of this in dimension $3$ (which is of course adaptable to dimension $2$).

In the upper half-space $\HQ^3$, an isometric sphere is not necessarily a Poincar\'e bisector. As explained in Section~\ref{secback}, we denote the isometric sphere of $\gamma$  by $\ISO_{\gamma}$ and the Poincar\'e bisector of the geodesic linking $j$ to $\gamma^{-1}(j)$ by $\Sigma_{\gamma}$.
%For this reason we define
%$\Sigma_{\gamma}:= \eta_0^{-1}(\Sigma_{\Psi(\gamma)})$, where $\eta_0$ was defined in (\ref{eta0}).  As $\eta_0$ is an 
%isometry between the two models, $\Sigma_{\gamma}$ is the bisector of the geodesic linking $\eta_0^{-1}(0)=j$ and $\eta_0^{-1}
%(\Psi(\gamma^{-1})(0))=\gamma^{-1}(j)$. 
This bisector may be a Euclidean sphere or a plane perpendicular to $\partial \HQ^3$.  If it is a Euclidean sphere,  we  denote its center by
$P_{\gamma}$  and its radius by $R_{\gamma}$.

The following result 
%from \cite{algebrapaper} 
gives concrete formulas for the Poincar\'e bisectors in the upper half-space model. 
%A proof may be found in \cite{algebrapaper}. 

\begin{proposition}\cite[Proposition 3.2]{algebrapaper}
\label{isogammaupmodel}

Let $\gamma =
\left(
\begin{array}{ll}
a & b \\ c & d
\end{array}
\right) \in \PSL_2(\C )$, with $\gamma \not \in \PSU_2(\C)$. 
\begin{enumerate}
\item
$\Sigma_{\gamma} $ is a Euclidean sphere if and
only if $|a|^2+|c|^2 \neq 1$. In this case, its
center  and its radius are respectively given by
$P_{\gamma}=\frac{-(\overline{a}b+\overline{c}d)}{|a|^2+|c|^2-1}$  and 
$R^2_{\gamma}=\frac{1+\vert P_{\gamma}\vert^2}{|a|^2+|c|^2}$.
\item
$\Sigma_{\gamma}$ is a plane if and only if
$|a|^2+|c|^2 = 1$. In this case
$Re(\overline{v}z)+\frac{|v|^2}{2}=0, z\in \C$ is
a  defining equation of $\ \Sigma_{\gamma}$,  where
$v=\overline{a}b+\overline{c}d$.
\end{enumerate}
\end{proposition}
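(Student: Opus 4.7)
The plan is to work directly from the definition $\Sigma_\gamma = \{P \in \HQ^3 \mid \rho(P, j) = \rho(P, \gamma^{-1}(j))\}$, using the explicit action formula for $\PSL_2(\C)$ on $\HQ^3$ recorded in Section~\ref{secback} together with the distance formula $\cosh \rho(P, P') = 1 + d(P,P')^2/(2rr')$.

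First, apply the action formula to $\gamma^{-1} = \begin{pmatrix} d & -b \\ -c & a \end{pmatrix}$ at the point $j$ (i.e.\ $z=0$, $r=1$), obtaining
\[
\gamma^{-1}(j) = -\frac{v}{A} + \frac{1}{A}\,j, \qquad \text{with } A := |a|^2+|c|^2,\ \ v := \overline{a}b+\overline{c}d.
\]
Note $A > 0$ since $(a,c)\neq (0,0)$, and that $A=1$ together with $v=0$ would force $\gamma^{-1}(j)=j$, i.e.\ $\gamma\in\PSU_2(\C)$, which is excluded by hypothesis.

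Next, writing $P = z + rj$ with $z \in \C$ and $r > 0$, equality of the two hyperbolic cosines reduces to the Euclidean identity $d(P,j)^2 = A\cdot d(P, \gamma^{-1}(j))^2$. Substituting $d(P,j)^2 = |z|^2+(r-1)^2$ together with the analogous expression for $d(P,\gamma^{-1}(j))^2$, expanding, and collecting terms yields the single real equation
\[
(1-A)(|z|^2 + r^2) - 2\,\mathrm{Re}(\overline{v}z) + \frac{A-1-|v|^2}{A} = 0. \qquad (\ast)
\]
The two cases of the proposition now correspond to whether the coefficient $1-A$ of the quadratic term vanishes. If $A=1$, $(\ast)$ collapses to $\mathrm{Re}(\overline{v}z) + |v|^2/2 = 0$, which is independent of $r$ and hence defines a vertical plane in $\HQ^3$; nontriviality follows because $v \neq 0$ under $A = 1$. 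If $A\neq 1$, divide by $A-1$ and complete the square in the complex variable $z$; the $r^2$ term passes through untouched, producing the equation of a Euclidean hemisphere whose center on $\partial\HQ^3$ is $P_\gamma = -v/(A-1)$, exactly the claimed formula.

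The main bookkeeping step is reassembling the squared radius: the completion of the square contributes $|v|^2/(A-1)^2$ and the constant term of $(\ast)$ contributes $(A-1-|v|^2)/(A(A-1))$, and these must combine to
\[
R_\gamma^2 = \frac{(A-1)^2 + |v|^2}{A(A-1)^2} = \frac{1 + |P_\gamma|^2}{A},
\]
using $|P_\gamma|^2 = |v|^2/(A-1)^2$. This is where the clean stated form emerges; everything else is a routine computation in $\C$.
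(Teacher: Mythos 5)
Your proof is correct: I have checked the computation of $\gamma^{-1}(j)=-\tfrac{v}{A}+\tfrac{1}{A}j$, the reduction of $\rho(P,j)=\rho(P,\gamma^{-1}(j))$ to $d(P,j)^2=A\,d(P,\gamma^{-1}(j))^2$, the resulting equation $(\ast)$, and the completion of the square giving $P_\gamma=-v/(A-1)$ and $R_\gamma^2=\bigl((A-1)^2+|v|^2\bigr)/\bigl(A(A-1)^2\bigr)=(1+|P_\gamma|^2)/A$, and all steps are accurate (including the observation that $A=1$ forces $v\neq 0$ since $\gamma\notin\PSU_2(\C)$). The paper itself states this proposition as a citation of \cite[Proposition 3.2]{algebrapaper} without reproducing a proof, and your argument is precisely the standard direct derivation from the definition of the bisector that the cited source carries out.
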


The next proposition gives some information on the relation between $\Iso_{\gamma}$ and $\Sigma_{\gamma}$, for some $\gamma \in \PSL_2(\C) \setminus \PSU_2(\C)$ with $c(\gamma) \neq 0$. Again the case $\gamma \in \PSU_2(\C)$ is excluded because otherwise $\Sigma_{\gamma}$ does not exist. This proposition will be useful in the study of DF domains and double Dirichlet domains.

\begin{proposition}
\label{isogammaupmodel2}

Let $\gamma =
\left(
\begin{array}{ll}
a & b \\ c & d
\end{array}
\right) \in \PSL_2(\C ) \setminus \PSU_2(\C)$. 
\begin{enumerate}
\item If $c\neq 0$ and $\vert a \vert^2 + \vert c \vert^2 \neq 1$ then
$\Iso_{\gamma}=\Sigma_{\gamma}$ if and only if
$d=\overline{a}$. In this
case, $\tr (\gamma )\in \R$ and if $b \neq 0$, $c=\lambda \overline{b}$,
with $\lambda \in \R$.  
\item If $c = 0$ and $\vert a \vert^2 = 1$, we also have that $a=\overline{d}$, $\tr (\gamma )\in \R$ and $c=\lambda \overline{b}$,
with $\lambda \in \R$. 
\end{enumerate}
\end{proposition}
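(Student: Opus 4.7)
\textbf{Proof plan for Proposition~\ref{isogammaupmodel2}.}

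My plan is to prove part 1 by comparing the isometric sphere $\Iso_\gamma$ and the Poincar\'e bisector $\Sigma_\gamma$ as Euclidean hemispheres, matching their centers and radii. Recall that if $P=z+rj \in \HQ^3$ and $c \neq 0$, then $|cP+d|^2 = |cz+d|^2+|c|^2r^2$, so $\Iso_\gamma$ is the hemisphere with Euclidean center $-d/c$ on $\C$ and radius $1/|c|$. By Proposition~\ref{isogammaupmodel}, under the hypothesis $|a|^2+|c|^2 \neq 1$ the bisector $\Sigma_\gamma$ is the hemisphere with center $P_\gamma=-(\bar a b+\bar c d)/(|a|^2+|c|^2-1)$ and radius $R_\gamma$ given by $R_\gamma^2=(1+|P_\gamma|^2)/(|a|^2+|c|^2)$. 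Equality of the two hemispheres is equivalent to equality of centers and radii.

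For the center equation $-d/c = -(\bar a b+\bar c d)/(|a|^2+|c|^2-1)$, I would clear denominators to get $d(|a|^2+|c|^2-1)=c(\bar a b+\bar c d)$, simplify to $d(|a|^2-1)=c\bar a b$, and then eliminate $bc$ using the determinant relation $ad-bc=1$, which gives $c\bar a b = \bar a(ad-1) = |a|^2 d-\bar a$. The equation thus reduces to $-d=-\bar a$, i.e.\ $d=\bar a$. Conversely, if $d=\bar a$, then $P_\gamma=-d/c$ and $|P_\gamma|^2=|d|^2/|c|^2=|a|^2/|c|^2$, so
\[
R_\gamma^2 = \frac{1+|a|^2/|c|^2}{|a|^2+|c|^2} = \frac{|c|^2+|a|^2}{|c|^2(|a|^2+|c|^2)} = \frac{1}{|c|^2},
\]
matching the radius of $\Iso_\gamma$. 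This settles the equivalence.

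The remaining assertions in part 1 are immediate: $d=\bar a$ gives $\tr(\gamma)=a+d=a+\bar a=2\Re(a)\in\R$, and $ad-bc=1$ together with $d=\bar a$ gives $bc=|a|^2-1 \in \R$. If $b \neq 0$, then $c=(|a|^2-1)/b=((|a|^2-1)/|b|^2)\bar b$, so $\lambda=(|a|^2-1)/|b|^2$ is real. Part 2 is a short direct check: when $c=0$ the determinant forces $ad=1$, so $d=1/a$; combined with $|a|^2=1$ this yields $d=\bar a$, hence $\tr(\gamma)=a+\bar a\in \R$, and $c=0=0\cdot \bar b$ trivially satisfies $c=\lambda\bar b$ with $\lambda=0$.

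The only step requiring any care is the algebraic manipulation showing centers agree iff $d=\bar a$; the key trick there is to use $ad-bc=1$ to eliminate the product $bc$, which is routine once one sees it. No deep obstacle is expected.
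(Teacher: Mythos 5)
Your proof is correct and follows essentially the same route as the paper's: both arguments identify $\Iso_{\gamma}$ and $\Sigma_{\gamma}$ as hemispheres, show the centers coincide if and only if $d=\overline{a}$ by using the determinant relation $ad-bc=1$ to eliminate $bc$, and then check that the radii agree for the converse, with the trace and $c=\lambda\overline{b}$ claims and part 2 handled by the same short computations. No gaps.
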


\begin{proof} 
First note that in 1, the case $c=0$ is excluded so that $\ISO_{\gamma}$ exists and, by Proposition~\ref{isogammaupmodel}, the condition $\vert a \vert^2 + \vert c \vert^2 \neq 1$ guarantees that $\Sigma_{\gamma}$ is a sphere. If we denote the center of $\Iso_{\gamma}$ by $\hat{P}_{\gamma}$, then $|\hat{P}_{\gamma}-P_{\gamma}|=|-\frac{d}{c} +
\frac{\overline{a}b+\overline{c}d}{|a|^2+|c|^2-1}|=\frac{|d-\overline{a}|}{|c|(|a|^2+|c|^2-1)}$. Hence $\Iso_{\gamma}=\Sigma_{\gamma}$ implies that $d=\overline{a}$ and therefore $bc =|a|^2-1\in
\R$. The latter implies that $tr(\gamma )=a+\overline{a}\in \R$ and that $b=0$ or $c=\lambda\overline{b}$ for some $\lambda \in \R$. To prove the converse, suppose that $d=\overline{a}$. Then by the above $P_{\gamma}=\hat{P}_{\gamma}$. Moreover if $P_{\gamma}=-\frac{d}{c}$, then, by Proposition~\ref{isogammaupmodel}, $R_{\gamma}=\frac{1}{\vert c \vert}$ and hence $\Iso_{\gamma}=\Sigma_{\gamma}$. This proves the first item.

In the second item we have that the isometric sphere does not exist and $\Sigma_{\gamma}$ is a plane (and not a sphere). The conditions $c=0$  and $\vert a \vert^2  = 1$ imply that $ad=1$ and $a\overline{a}=1$. Hence 
$d=\overline{a}$ and $tr(\gamma) \in \R$. As $c=0$, the equality $c=\lambda \overline{b}$,
with $\lambda \in \R$ is trivially true.
\end{proof}

\begin{remark}\label{compareisobisc0}
Note that Proposition~\ref{isogammaupmodel2} does not treat the cases $c \neq 0$ and $\vert a \vert^2 + \vert c \vert^2 = 1$ and $c=0$ and $\vert a \vert^2 \neq 1$. In the first case the isometric sphere exists but $\Sigma_{\gamma}$ does not have the form of a sphere. In the second case $\Sigma_{\gamma}$ exist in the form of a sphere, but the isometric sphere does not exist. So in both cases it does not make sense to compare the isometric sphere with $\Sigma_{\gamma}$.
\end{remark}

\section{DF Domains and Double Dirichlet Domains}

The goal of this section is to prove Theorem~\ref{lakeland} and give some consequences that reprove and complement some  results in \cite{lakeland}.
%show some useful corollaries of it, which have the same conclusions as large parts of \cite{lakeland} or complement some of its results.

The following definitions are taken from \cite{lakeland}. 

\begin{definition}\label{DFdef}
A Dirichlet  fundamental domain which is also a Ford domain in $\HQ^n$ is called a DF-domain.
%\end{definition}
%
%\begin{definition}\label{doubledirichletdef} 
A Dirichlet fundamental domain which has multiple centers is called a double Dirichlet Domain.
\end{definition}

Throughout this section, we work in $\HQ^2$ and $\HQ^3$ and we assume, without loss of generality, that the 
stabilizer of $i$, or $j$ respectively, in $\Gamma$, is trivial. The latter is possible by conjugating, if needed, 
the group $\Gamma$ by an adequate affine subgroup of $\PSL_2(\R)$, or $\PSL_2(\C)$ respectively. Indeed, denote by 
$\mathcal{A}$ the subgroup of $\PSL_2(\R)$, or $\PSL_2(\C)$ consisting of upper triangular matrices. Consider the conjugated group $\tau \Gamma \tau^{-1}$ of $\Gamma$ for some $\tau \in 
\mathcal{A}$ and let $P_0 \in \lbrace i, j \rbrace$, according to the space being $\HQ^2$ or $\HQ^3$ respectively. Then $
(\tau \Gamma \tau^{-1})_{P_0} = \tau \Gamma_{\tau^{-1}(P_0)} \tau^{-1}$ and thus if the stabilizer of $\tau^{-1}
(P_0)$ is trivial in $\Gamma$, the stabilizer of $P_0$ is trivial in $\tau \Gamma \tau^{-1}$. Let $\F$ be some 
fundamental domain for $\Gamma$. By definition every point in the interior of $\F$ has trivial stabilizer. As $
\mathcal{A}$ acts transitively on the upper half-plane, there exists $\tau \in \mathcal{A}$ such that $
\Gamma_{\tau^{-1}(P_0)}$ is trivial. Moreover, if $\Gamma$ contains a parabolic element of the form $1 \neq 
\begin{pmatrix} 1 & b \\ 0 & 1\end{pmatrix}$, then the conjugate $\tau \Gamma \tau^{-1}$ also contains such a 
parabolic element. So, instead of proving the results for $\Gamma$, we will prove them for a group conjugated to $
\Gamma$ with trivial stabilizer of $P_0$. It is easy to see that if $\tau \Gamma \tau^{-1}$ has a double Dirichlet 
domain, $\Gamma$ has a double Dirichlet domain. Similarly if $\tau \Gamma \tau^{-1}$ has a DF domain, $\Gamma$ has 
a DF domain.

We first give two lemmas on Fuchsian groups.

\begin{lemma}
\label{dfcriterium}
The following properties  are equivalent for $1 \neq \gamma \in \PSL _2(\R )$.
\begin{enumerate}
\item  $a(\gamma )=d(\gamma)$.
\item $\gamma =  \sigma\circ \sigma_{\gamma}$, where $\sigma$ denotes the reflection in the imaginary axes, i.e.,  $\sigma(z)=-{\overline{z}}$ and $\sigma_{\gamma}$ is the reflection in $\Sigma_{\gamma}$.
\item $\Sigma_{\gamma}$ is the bisector of the geodesic linking $ti$ and $\gamma^{-1}(ti)$, for all $t>0$ .
\item There exists $0<t_0\neq 1$ such that  $\Sigma_{\gamma}$ is the bisector of the geodesic segment linking $t_0i$ and $\gamma^{-1}(t_0i)$.
\end{enumerate}
 \end{lemma}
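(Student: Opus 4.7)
My plan is to prove the equivalences via the cycle $(1)\Rightarrow(2)\Rightarrow(3)\Rightarrow(4)\Rightarrow(2)$ together with the separate implication $(2)\Rightarrow(1)$. The two directions of $(1)\Leftrightarrow(2)$ will be handled algebraically, while the geometric content is concentrated in $(2)\Leftrightarrow(4)$.

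For $(1)\Leftrightarrow(2)$, the key computation is to write the M\"obius map $\sigma\gamma\sigma$ (with $\sigma(z)=-\overline{z}$) explicitly and compare with $\gamma^{-1}$. A direct calculation gives $\sigma\gamma\sigma(z)=\frac{az-b}{-cz+d}$, which equals $\gamma^{-1}(z)=\frac{dz-b}{-cz+a}$ exactly when the polynomial $(d-a)[cz^{2}-(a+d)z+b]$ vanishes identically in $z$. Since the alternative $c=b=a+d=0$ is ruled out by $\det\gamma=1$ (it would force $-d^{2}=1$), this is equivalent to $a=d$, i.e.\ to $(\sigma\gamma)^{2}=1$. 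Now $\sigma\gamma$ is orientation-reversing, so it is an involution if and only if it is the reflection in some geodesic $L$. The geodesic $L$ is necessarily the perpendicular bisector of $i$ and $\sigma\gamma(i)=-\overline{\gamma(i)}$; a short computation shows that when $a=d$ this image equals $\gamma^{-1}(i)$, so $L=\Sigma_{\gamma}$ and $\sigma\gamma=\sigma_{\gamma}$, giving $\gamma=\sigma\sigma_{\gamma}$. Running the same factorization in reverse yields $(2)\Rightarrow(1)$.

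For $(2)\Rightarrow(3)$ I would use the clean observation that $\gamma=\sigma\sigma_{\gamma}$ together with the fact that both $\sigma$ and $\sigma_{\gamma}$ are involutions gives $\gamma^{-1}=\sigma_{\gamma}\sigma$. Since $\sigma$ fixes every point $ti$ on the positive imaginary axis, one obtains $\gamma^{-1}(ti)=\sigma_{\gamma}(ti)$ for all $t>0$, so $\Sigma_{\gamma}$ (the fixed geodesic of $\sigma_{\gamma}$) is the perpendicular bisector of $ti$ and $\gamma^{-1}(ti)$ for every $t$. The step $(3)\Rightarrow(4)$ is trivial. For $(4)\Rightarrow(2)$, I form $\tau:=\gamma\sigma_{\gamma}$: by hypothesis $\sigma_{\gamma}$ sends $t_{0}i$ to $\gamma^{-1}(t_{0}i)$, and by the defining property of $\Sigma_{\gamma}$ it also sends $i$ to $\gamma^{-1}(i)$, so $\tau$ fixes both $i$ and $t_{0}i$. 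Being orientation-reversing and fixing two distinct points, $\tau$ must be the reflection in the geodesic joining them, namely the positive imaginary axis; that reflection is $\sigma$, so $\gamma\sigma_{\gamma}=\sigma$, hence $\gamma=\sigma\sigma_{\gamma}$.

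The step I expect to be the main obstacle is the algebraic part of $(1)\Leftrightarrow(2)$: one must be careful with the anti-holomorphic factor $\sigma$ when composing M\"obius maps, and then argue carefully that the resulting polynomial identity in $z$ really forces $a=d$ rather than the degenerate alternative. By contrast, $(2)\Leftrightarrow(4)$ is essentially a rigidity argument resting on the standard fact that an orientation-reversing isometry of $\HQ^{2}$ fixing two distinct points must be the reflection in the geodesic through them.
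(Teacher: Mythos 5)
Your proof is correct, and it takes a genuinely different route from the paper's at both nontrivial steps. For $(1)\Leftrightarrow(2)$ the paper works through the explicit centre/radius formulas of Propositions~\ref{isogammaupmodel} and~\ref{isogammaupmodel2} to identify $\Sigma_{\gamma}$ with $\Iso_{\gamma}$ (or with the vertical line $x=-b/2$ when $c=0$) and then writes out the inversion formula to obtain $\sigma_{\gamma}=\sigma\circ\gamma$; you instead reduce everything to the matrix identity $\sigma\gamma\sigma=\gamma^{-1}$, show it is equivalent to $a=d$ (correctly disposing of the degenerate factor via $\det\gamma=1$), and then use the classification of orientation-reversing involutions of $\HQ^{2}$ as reflections to identify the fixed geodesic of $\sigma\gamma$ with $\Sigma_{\gamma}$. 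To close the cycle the paper proves $(4)\Rightarrow(1)$ analytically, via $\mathrm{Im}(\gamma(z))=|\gamma'(z)|\,\mathrm{Im}(z)$ and a quadratic equation in $t$ forcing $|\gamma'|\equiv 1$ on $\Sigma_{\gamma}$, i.e.\ $\Sigma_{\gamma}=\Iso_{\gamma}$, and then re-invokes Proposition~\ref{isogammaupmodel2}; your $(4)\Rightarrow(2)$ is the cleaner rigidity argument that $\gamma\sigma_{\gamma}$ is an orientation-reversing isometry fixing the two distinct points $i$ and $t_{0}i$ and hence equals $\sigma$. What your approach buys is a synthetic proof independent of the bisector formulas imported from the companion paper; what the paper's approach buys is that the identity $\Sigma_{\gamma}=\Iso_{\gamma}$ drops out explicitly along the way, which is the form in which the lemma gets used afterwards (though your version recovers it a posteriori from $a=d$ via Proposition~\ref{isogammaupmodel2}). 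One shared, minor blemish: both arguments tacitly assume $\gamma(i)\neq i$ so that $\Sigma_{\gamma}$ exists, and in $(2)\Rightarrow(3)$ a point $ti$ lying on $\Sigma_{\gamma}$ makes the ``bisector of $ti$ and $\gamma^{-1}(ti)$'' degenerate; the paper glosses over this in exactly the same way, so it is not a gap relative to the paper's own standard.
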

 
\begin{proof} 
%From the definitions of the linear operators $\sigma$ and $\tau$, one may easily deduce that  $\sigma^2\circ\tau$ denotes the reflection in the imaginary axes, i.e.,  $\sigma^2\circ\tau (z)=-{\overline{z}}$.  
We  first prove that 1 implies 2.
Suppose that $a(\gamma )=d(\gamma )$ and suppose first that $c(\gamma)=0$. Without loss of generality, we may take $a(\gamma )=d(\gamma )=1$. By Proposition~\ref{isogammaupmodel}, $\Sigma_{\gamma}$ is the line given by the equation $x=-\frac{b}{2}$ and thus $\sigma_{\gamma}(z) =\sigma (z+b(\gamma ))= \sigma (\gamma(z))$. If $c(\gamma) \neq 0$, by Proposition~\ref{isogammaupmodel2}, we  have that $\Sigma_{\gamma}=\Iso_{\gamma}$. Hence the  reflection $\sigma_{\gamma}$ in $\Sigma_{\gamma} $  is given by  $\sigma_{\gamma}(z)=P_{\gamma}-(|c|^2\sigma (z- P_{\gamma}))^{-1}=\sigma (\gamma (z))$. In either case, we have that  $\gamma =  \sigma\circ \sigma_{\gamma}$.  
%Now suppose that  $\gamma =  \sigma\circ \sigma_{\gamma}$. We first suppose that $P_{\gamma}$ exists, i.e., $\Sigma_{\gamma}$ is a Euclidean sphere. In this case we have that $\gamma (z) = \sigma\circ\sigma_{\gamma}(z)$ $=\frac{-P_{\gamma}z+P_{\gamma}^2-R_{\gamma}^2}{z-P_{\gamma}}$, from which it follows that $a(\gamma )=d(\gamma )$. If $P_{\gamma}$ does not exist, and hence $\Sigma_{\gamma}$ is a vertical line, $x=x_0$ say,  then $\sigma_{\gamma}(z)=-\overline{z}+2x_0$. Hence $\gamma(z)=\sigma\circ\sigma_{\gamma}(z)=z-2x_0$ and hence $a(\gamma )=1=d(\gamma )$.

Suppose now that $\gamma = \sigma\circ \sigma_{\gamma}$ and let  $u\in \Sigma_{\gamma}$. Then $\rho (u,\gamma^{-1}(ti))=\rho (u,\sigma_{\gamma}\circ\sigma (ti))$ $=\rho (u,\sigma_{\gamma}(ti))=\rho (\sigma_{\gamma}(u),ti)=\rho (u,ti)$ and hence $\Sigma_{\gamma}$ is the bisector of the geodesic linking $ti$ and $\gamma^{-1}(ti)$. This proves that 2 implies 3.  Obviously  3 implies 4.

We now prove that 4 item implies 1. Let $u\in \Sigma_{\gamma}$. Then we have that $\rho (u,t_0i)=\rho (u,\gamma^{-1}(t_0i))$ and hence $\rho (u,t_0i)=\rho (\gamma(u),t_0i)$. Since $\gamma$ is a M\"{o}bius transformation we have that ${\rm Im}(\gamma (z))=|\gamma^{\prime}(z)|{\rm Im}(z)$.  Using this and the explicit formula of the hyperbolic distance in the upper half-plane model (see Section \ref{secback}),  we obtain that $ |\gamma^{\prime}(u)||t_0i-u|^2=|t_0i-\gamma (u)|^2$.  It follows that ${\rm Re}(u)^2 |\gamma^{\prime}(u)| -{\rm Re }(\gamma (u))^2$ $=(1-|\gamma^{\prime}(u)|)t_0^2+(|\gamma^{\prime}(u)|-1)|\gamma^{\prime}(u)|{\rm Im} (u)^2$. We may write this as  an equation of the type $\alpha t^2=\beta$ having $t=t_0$ as a solution. However as $u \in \Sigma_{\gamma}$, by definition $\rho(u,i)=\rho(u,\gamma^{-1}(i))$ and hence also $t=1$ is also solution of the given equation. Thus we have that $\alpha=\beta$ and  $\alpha (t_0^2-1)=0$. It follows that $\alpha =0$ and thus $|\gamma^{\prime}(u)|=1$, for all $u\in \Sigma_{\gamma}$, i.e. $\Sigma_{\gamma}=\Iso_{\gamma}$.  Applying Proposition~\ref{isogammaupmodel2}, we obtain that $a(\gamma )=d(\gamma )$.
\end{proof}

Recall  that one says that  an angle $\alpha$ is a submultiple of an angle $\beta$  if either
there is a positive integer $n$ such that $n\alpha=\beta$ or $\alpha= 0$.

\begin{lemma}\label{corodfcriterium}
Let $\Gamma $ be a cofinite discrete subgroup of $\PSL_2(\R)$. Suppose that $i\in \HQ^2$  has  trivial stabilizer and let $\F$ be its Dirichlet fundamental polygon with center $i$.  Let $\gamma_k$ be the side-pairing transformations of $\F$ for $1 \leq k \leq n$.  If, for every $1 \leq k \leq n$, $a(\gamma_k)=d(\gamma_k)$, then $\Gamma$ is the subgroup of the orientation-preserving isometries of  a discrete reflection group.
\end{lemma}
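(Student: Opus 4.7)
The plan is to show that $\tilde{\Gamma} := \langle \sigma, \Gamma \rangle$, where $\sigma$ is the reflection $z \mapsto -\bar z$, is a discrete reflection group whose orientation-preserving subgroup is $\Gamma$, with explicit fundamental polygon $\F^+ := \F \cap \{\mathrm{Re}(z) \geq 0\}$.

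First I would apply Lemma~\ref{dfcriterium} to each side-pairing $\gamma_k$: the hypothesis $a(\gamma_k) = d(\gamma_k)$ yields $\gamma_k = \sigma \circ \sigma_{\gamma_k}$, where $\sigma_{\gamma_k}$ is the reflection in $\Sigma_{\gamma_k}$. Using that both factors are involutions, this rearranges to the key identity $\sigma \gamma_k \sigma = \gamma_k^{-1}$. Because the $\gamma_k$ generate $\Gamma$ (the stabilizer of $i$ being trivial, Poincar\'e's theorem gives this), it follows that $\sigma \Gamma \sigma = \Gamma$, and hence $\tilde{\Gamma} = \Gamma \sqcup \sigma\Gamma$ with $[\tilde{\Gamma} : \Gamma] = 2$. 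In particular $\tilde{\Gamma}$ is discrete and cofinite, and since $\sigma$ is orientation-reversing while $\Gamma$ consists of orientation-preserving maps, $\Gamma$ is precisely the orientation-preserving subgroup of $\tilde{\Gamma}$.

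Next I would show that $\F^+$ is a fundamental polygon of $\tilde{\Gamma}$. Since $\sigma$ fixes $i$ and normalizes $\Gamma$, the image $\sigma(\F)$ is again a Dirichlet polygon for $\Gamma$ centered at $i$; by the uniqueness of such a polygon one gets $\sigma(\F) = \F$, so $\F$ is symmetric about the imaginary axis and $\F^+$ is a convex polygon with $\mathrm{Area}(\F^+) = \mathrm{Area}(\F)/2 = \mathrm{covol}(\tilde\Gamma)$. Coverage is immediate: for $p \in \HQ^2$, pick $\gamma \in \Gamma$ with $\gamma^{-1}(p) \in \F$, and if necessary apply $\sigma$ to land in $\F^+$, yielding $p \in \gamma(\F^+) \cup \gamma\sigma(\F^+) \subseteq \tilde{\Gamma}(\F^+)$. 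Combined with the area match, this forces $\F^+$ to be a fundamental polygon of $\tilde{\Gamma}$.

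To conclude, I would identify the pairings of $\F^+$ as reflections in its sides. Applying Lemma~\ref{dfcriterium} to $\gamma_k^{-1}$, which also satisfies $a=d$, shows that the reflection in the bisector $\Sigma_{\gamma_k^{-1}}$ equals $R_k := \sigma \circ \gamma_k^{-1} \in \tilde{\Gamma}$. The sides of $\F^+$ then split into (i) a piece of the imaginary axis, paired by the reflection $\sigma$, and (ii) sub-arcs of sides of $\F$ sitting in $\Sigma_{\gamma_k^{-1}} \cap \{\mathrm{Re}(z) \geq 0\}$, paired by $R_k$. Since every side-pairing of $\F^+$ is the reflection in the side, $\tilde{\Gamma}$ is a discrete reflection group and $\Gamma$ is its orientation-preserving subgroup. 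The main obstacle is the bookkeeping in this last step when a side of $\F$ straddles the imaginary axis: such a side must be cut into two sub-arcs of $\F^+$, and one must verify that $R_k$, rather than some other element of $\tilde{\Gamma}$ stabilizing the same bisector, is the genuine pairing of each piece.
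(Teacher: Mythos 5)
Your argument is correct, and while it shares the paper's basic ingredients --- Lemma~\ref{dfcriterium} giving $\gamma_k=\sigma\circ\sigma_{\gamma_k}$, the resulting symmetry $\sigma(\F)=\F$, and the half-polygon $\F^+=\F\cap\{\mathrm{Re}(z)\geq 0\}$ (essentially the paper's polygon $P$) --- it reaches the conclusion by a genuinely different mechanism. The paper does not form $\langle \sigma,\Gamma\rangle$ and test it directly; instead it verifies that all dihedral angles of $P$ are submultiples of $\pi$, using the cycle-of-sides theorem \cite[Theorem 6.8.7]{ratcliffe} for vertices off the axis and a separate halving/perpendicularity argument for vertices on the imaginary axis, and then invokes Poincar\'e's theorem for reflection groups \cite[Theorem 7.1.3]{ratcliffe} to manufacture the discrete reflection group $\langle\sigma,\sigma_{\gamma_k}\rangle$. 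You go the other way: discreteness of $\tilde\Gamma$ comes for free from $\sigma\Gamma\sigma=\Gamma$ and $[\tilde\Gamma:\Gamma]=2$, you certify $\F^+$ as a fundamental polygon (in fact the area count is not even needed: disjointness of the images of the interior follows directly from $\sigma(\mathrm{int}\,\F^+)\subseteq\{\mathrm{Re}(z)<0\}$ together with the fundamental-domain property of $\F$), and you check that each side-pairing of $\F^+$ is the reflection in that side, which is exactly the paper's definition of a reflection group. The \emph{straddling} worry you flag at the end does dissolve: from $\sigma\gamma_k^{-1}=\gamma_k\sigma$ and $\sigma(\F)=\F$ one gets $R_k(\F)=\gamma_k(\F)$, and since $R_k$ fixes $\Sigma_{\gamma_k^{-1}}$ pointwise this yields $\F^+\cap R_k(\F^+)=\left(\F\cap\gamma_k(\F)\right)\cap\{\mathrm{Re}(z)\geq 0\}$, which is precisely the corresponding side of $\F^+$ whether or not the side of $\F$ crosses the axis. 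The trade-off: your route avoids the paper's somewhat delicate angle verification, but it produces the submultiple-of-$\pi$ angle data only a posteriori, whereas the paper obtains it explicitly en route, and that data is the natural input for the Coxeter-group statement in Theorem~\ref{lakeland}.
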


\begin{proof}
First note that, as $a(\gamma_k)=d(\gamma_k)$, we have by Lemma~\ref{isogammaupmodel2} that $\Sigma_{\gamma_k}=\Iso_{\gamma_k}$. This means that $\Sigma_{\gamma_k^{-1}}$ has the same radius as $\Sigma_{\gamma_k}$ and their centers are the same in absolute value, but have opposite sign, and this for every $1 \leq k \leq n$. Hence $\F$ is symmetric with respect to the imaginary axis $\Sigma$. Consider the polygon $P$, whose sides are $\Sigma$ and the $\Sigma_{\gamma_k}$'s with $P_{\gamma_k}\geq 0$. We claim that all the dihedral angles of $P$
are submultiples of $\pi$. First we prove this statement for the dihedral angles between two sides of $\F$. For this, consider a vertex $E_k \subseteq \Sigma_{\gamma_k} \cap \Sigma_{\gamma_l}$ for 
$1 \leq k \neq l \leq n$. 
By the symmetry of $\F$, $\lbrace \Sigma_{\gamma_k}, \Sigma_{\gamma_{l}^{-1}} \rbrace$ is a finite sequence of sides of $\F$ determined by $\Sigma_{\gamma_k}$ and $E_k$, according to the definition of \cite[Chapter 6.8]{ratcliffe}.
%is a cycle at the vertex $E_k$. For more details on cycles we invite the interested reader to consult 
By \cite[Theorem 6.8.7]{ratcliffe} the dihedral angle at the vertex $E_k$ is a 
submultiple of $\pi$ and this is true for every $1 \leq k \leq n$. Let $\Sigma_{\gamma_0}$ be the side of $\F$ with 
$P_{\gamma_0}\geq 0$ and such that $\Sigma_{\gamma_0} \cap \Sigma$ is a vertex of $P$ and consider the angle $\theta$ 
between $\Sigma$ and $\Sigma_{\gamma_0}$. If $P_{\gamma_0} > 0$, then the side $\Sigma_{\gamma_0^{-1}}$ has the 
same radius but $P_{\gamma_0^{-1}} < 0$. Hence, if  $\Sigma_{\gamma_0}$ intersects the imaginary axis $\Sigma$,
then so does $\Sigma_{\gamma_0^{-1}}$ and $\theta$ is half the the angle 
between $\Sigma_{\gamma_0}$ and $\Sigma_{\gamma_0^{-1}}$, which is a submultiple of $\pi$ by the previous. If 
$P_{\gamma_0}=0$, then $\Sigma$ is perpendicular to $\Sigma_{\gamma_0}$. Thus in both cases the angle between $
\Sigma$ and the adjacent side in $P$ is a submultiple of $\pi$. This proves the claim. Finally, by \cite[Theorem 
7.1.3]{ratcliffe}, the group $\tilde{\Gamma}=\langle  \sigma , \sigma_{\gamma_k} \mid P_{\gamma_k}\geq 0\rangle$, 
where $\sigma_k$ denotes the reflection in $\Sigma_{\gamma_k}$, is a discrete reflection group with respect to $P$. 
The result then follows by Lemma~\ref{dfcriterium}.
\end{proof}

We are now ready to prove Theorem~\ref{lakeland}.\\

\par\noindent{\bf Proof of Theorem~\ref{lakeland}.}  Let $\mathcal{F}$ be a DF domain, in $\HQ^2$ or $\HQ^3$ respectively, for $\Gamma$ with center $P_0\in \{i,j\}$. Let $\Phi_0$ be a set of  side-pairing transformations.
Consider $\gamma=\begin{pmatrix} a & b \\ c & d \end{pmatrix}\in \Phi_0$ with $c \neq 0$. As $\F$ is a Ford domain $\mathcal{F}\cap \gamma^{-1} (\mathcal{F}) \subseteq \Iso_{\gamma}$. As $\F$ is a also a Dirichlet domain $\mathcal{F}\cap \gamma^{-1} (\mathcal{F}) \subseteq\Sigma_{\gamma}$. Thus $\Iso_{\gamma}=\Sigma_{\gamma}$ and, as $\ISO_{\gamma}$ is a sphere, $\Sigma_{\gamma}$ is a sphere and hence $\vert a \vert^2 + \vert c \vert^2 \neq 1$. Thus, by Proposition~\ref{isogammaupmodel2}, $d=\overline{a}$. We now consider the case when $c=0$. Then, as $\F$ is a Ford domain, $\F \cap \gamma(\F)$ is a plane coming from the convex fundamental polyhedron of $\Gamma_{\infty}$. As $\F$ is a also a Dirichlet domain, $\Sigma_{\gamma}$ is a plane and hence $\vert a \vert^2 = \vert a \vert^2 + \vert c \vert^2 = 1$ and thus the second item of Proposition~\ref{isogammaupmodel2} allows to conclude. 

We now prove the converse. Let $\F$ be a Dirichlet or Ford fundamental domain and let $\Phi_0$ be a set of  side-pairing transformations, such that for every $\gamma=\begin{pmatrix} a & b \\ c & d \end{pmatrix} \in \Phi_0$, $d=\overline{a}$. 
Suppose first that $c \neq 0$, i.e. the isometric sphere associated to $\gamma$ exist. We claim that $\vert 
 a \vert^2 + \vert c \vert^2  \neq 1$. By contradiction, suppose the contrary. As $d=
\overline{a}$ and $det(\gamma)=1$, we have that $b=-\overline{c}$. Hence $\Vert \gamma 
\Vert^2=2\vert a \vert^2 + 2 \vert c\vert^2 =2$. By \cite[Theorem 2.5.1]{beardon}, $\gamma \in 
\PSU_2(\C)$ which is in contradiction with the fact that the stabilizer of $P_0$ is trivial. Hence by Proposition~
\ref{isogammaupmodel2}, $\Iso_{\gamma}=\Sigma_{\gamma}$. Suppose now that $c = 0$. Then $\F \cap \gamma(\F)$ is a plane coming from the convex fundamental polyhedron of $\Gamma_{\infty}$. The facts that 
$d=\overline{a}$ and $det(\gamma)=1$ imply that $\vert a \vert^2=1$ and thus $\Sigma_{\gamma}$ is a plane given by the equation $Re(a\overline{b}z)=-\frac{\vert b\vert^2}{2}$. By choosing the fundamental polyhedron of $\Gamma_{\infty}$ well, one of its sides coincides with $\Sigma_{\gamma}$. 

To prove the last part of Theorem~\ref{lakeland}, suppose that $\Gamma$ is Fuchsian.  By Lemma~\ref{corodfcriterium} we have that $\tilde{\Gamma}= \langle \sigma, \Gamma \rangle$  is a reflection group containing $\Gamma$ as a subgroup of index $2$. Consider finally the group $\hat{\Gamma}:=\langle \tau ,
 \Gamma \rangle$. 
It is clear that $\tau^ 2=1$ and by computation $(\tau \gamma)^2=1$ for all $\gamma \in \Phi_0$. Moreover, it is easy to compute that $(\tau \gamma)(\tau \gamma')$ has order at least $2$. It thus follows that
%Since  $o(\tau\gamma)=2$, $\tr
%(\tau\gamma)=0$, for all $\gamma \in \Phi_0$ it follows that  
$\hat{\Gamma}$  is a Coxeter group with $[\hat{\Gamma} :\Gamma ]=2$.
$\qed$\par\bigskip.

Note that a presentation of $\ \tilde{\Gamma}$ and $\hat{\Gamma}$  can be obtained using \cite[Theorem II.7.5]{elstrodt}.  Also this result simplifies a lot the proof of \cite[Theorem 3.1]{lakeland}, i.e. it easily follows that the orbifold of $\Gamma$ is a punctured sphere in the Fuchsian case. Moreover, as is shown by the next corollary, \cite[Theorem 7.3]{lakeland} follows easily from Theorem~\ref{lakeland}.

\begin{corollary}\label{TraceRealC}
Let $\Gamma<\PSL_2(\C )$ be a cofinite discrete group and suppose $\Gamma$ admits a DF domain $\Fe$. Then, for every side-pairing transformation $\gamma$, $\tr (\gamma )\in \R$ and the vertical planes bisecting $\Sigma_{\gamma}$ and $\Sigma_{\gamma^{-1}}$ (for $\gamma \not \in \Gamma_{\infty}$) all intersect in a vertical axis.
\end{corollary}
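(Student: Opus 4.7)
The plan is to derive both assertions directly from Theorem~\ref{lakeland}, which tells us that every side-pairing transformation $\gamma=\begin{pmatrix} a & b \\ c & d \end{pmatrix}$ of $\F$ satisfies $d=\overline{a}$. The first assertion is immediate: $\tr(\gamma)=a+d=a+\overline{a}=2\Re(a)\in\R$.

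For the second assertion, restrict to $\gamma\notin\Gamma_\infty$, so that $c\neq 0$. As shown in the proof of Theorem~\ref{lakeland}, the condition $|a|^2+|c|^2\neq 1$ is automatic (otherwise $\gamma\in\PSU_2(\C)$, contradicting the triviality of the stabilizer of $j$), so by Proposition~\ref{isogammaupmodel2} we have $\Sigma_\gamma=\Iso_\gamma$. Hence $\Sigma_\gamma$ is the Euclidean hemisphere with boundary-center $P_\gamma=-d/c=-\overline{a}/c$ and radius $1/|c|$. Since $\gamma^{-1}=\begin{pmatrix}\overline{a} & -b \\ -c & a\end{pmatrix}$, the same reasoning gives $P_{\gamma^{-1}}=a/c$ and the same radius $1/|c|$. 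Because both hemispheres meet $\partial\HQ^3$ orthogonally and have equal radius, the vertical plane bisecting them is the vertical lift of the perpendicular bisector in $\C=\partial\HQ^3\setminus\{\infty\}$ of the segment $[P_\gamma,P_{\gamma^{-1}}]$.

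I would then compute this bisector explicitly by expanding $|z-P_\gamma|^2=|z-P_{\gamma^{-1}}|^2$. The terms $|P_\gamma|^2$ and $|P_{\gamma^{-1}}|^2$ both equal $|a|^2/|c|^2$ and cancel, reducing the equation to
\[
2\,\Re(a)\,\Re(cz)=0.
\]
Apart from the degenerate situation $\Re(a)=0$, in which $P_\gamma=P_{\gamma^{-1}}$ so no bisecting plane need be considered, this becomes the line $\Re(cz)=0$ in $\C$. Crucially, this line passes through $z=0$ for every admissible $\gamma$, independently of $a$ and $c$. Therefore the vertical axis $\{tj:t>0\}\subset\HQ^3$ lies in the bisecting vertical plane of $\Sigma_\gamma$ and $\Sigma_{\gamma^{-1}}$ for every such $\gamma$, establishing the common vertical axis of intersection. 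Geometrically, this axis passes through the Dirichlet center $P_0=j$ and the Ford-distinguished point $\infty$, which is a reassuring sanity check.

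The only real obstacle is seeing why the constant terms drop out of the bisector equation, and this is exactly the place where $d=\overline{a}$ is used a second time: it forces $|P_\gamma|=|\overline{a}|/|c|=|a|/|c|=|P_{\gamma^{-1}}|$. Once this symmetry is observed, the rest is a short computation, and the common axis above the origin emerges automatically from the fact that $P_{\gamma^{-1}}=-\overline{P_\gamma}\cdot(c/\overline{c})$, a relation coming directly from Theorem~\ref{lakeland}.
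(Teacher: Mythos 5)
Your proposal is correct and follows essentially the same route as the paper: derive $d=\overline{a}$ from Theorem~\ref{lakeland}, read off $\tr(\gamma)=a+\overline{a}\in\R$, identify $\Sigma_{\gamma}$ and $\Sigma_{\gamma^{-1}}$ as the spheres centered at $-\overline{a}/c$ and $a/c$ with equal radius $1/|c|$, and check that their Euclidean bisector passes through $0$, hence through $j$. You merely write out explicitly the ``simple computation'' the paper leaves implicit (and note the degenerate case $\Re(a)=0$, which the paper does not mention); there is no substantive difference in approach.
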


\begin{proof}
Without loss of generality, we may assume that $\Gamma$ admits a DF domain with center $j$ (see the beginning of the section).
%Indeed suppose $\Gamma$ admits a DF domain with center $P \neq j$. As the affine subgroup $\mathcal{A}$ of $\PSL_2(\C)$ acts transitively on the upper half-space, there exists $\tau \in \mathcal{A}$ such that $\tau(P)=j$. Then it is well-known that if $\Gamma$ has a DF domain with center $P$, $\tau \Gamma \tau^{-1}$ has a DF domain with center $j$.
That $\tr (\gamma )\in \R$, for $\gamma$ a   side-pairing transformation,  is  a direct consequence of Theorem~\ref{lakeland} or of Proposition~\ref{isogammaupmodel2}.

%{\bf THIS PARAGRAPH IS WRITTEN STRANGELY. CAN YOU IMPROVE IT?} {\bf WHY? WHAT IS STRANGE ABOUT IT?}
If $\gamma \not \in \Gamma_{\infty}$, $\Sigma_{\gamma}$ and $\Sigma_{\gamma^{-1}}$ are Euclidean spheres with center $-\frac{\overline{a(\gamma)}}{c(\gamma)}$ and $\frac{a(\gamma)}{c(\gamma)}$ respectively. A simple computation then shows that the Euclidean bisector of these two points contains the point $0$ and hence the vertical plane bisecting $\Sigma_{\gamma}$ and $\Sigma_{\gamma^{-1}}$ contains the point $j$. Hence all these vertical planes intersect in a vertical line through $j$.
\end{proof}

We now consider when a fundamental domain is a double Dirichlet domain. The next two corollaries of our main Theorem give an alternative way to \cite[Section 4]{lakeland} to treat such domains.

\begin{corollary}\label{mainDFdouble}
Let $\Gamma$ be a cofinite Fuchsian group with trivial stabilizer of $i \in \HQ^2$. Then the following properties are equivalent.
\begin{enumerate}
\item $\Gamma$ is the subgroup of orientation-preserving isometries of a Fuchsian reflection group containing the reflection in the imaginary axis.
\item $\Gamma$ has a DF domain  with center $i$.
\item $\Gamma$ has a Dirichlet fundamental domain $\mathcal{F}$ with center $i$ such that, for every  side-pairing transformation $\gamma$, $a(\gamma)=d(\gamma)$. 
\item $\Gamma$ has a Dirichlet fundamental domain $\mathcal{F}$  with $i$ and $t_0i$ as centers, for some $1 \neq t_0 >0$.
\item $\Gamma$ has a Dirichlet fundamental domain $\mathcal{F}$ such that all the points of the geodesic through $i$ and $ti$, for $t >0$, are centers of $\mathcal{F}$.
\end{enumerate}
\end{corollary}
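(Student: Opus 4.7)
The plan is to close the cycle $3 \Rightarrow 1 \Rightarrow 5 \Rightarrow 4 \Rightarrow 3$ together with the equivalence $2 \Leftrightarrow 3$. The equivalence $2 \Leftrightarrow 3$ is the Fuchsian specialization of Theorem~\ref{lakeland}: since every matrix entry lies in $\R$, the identity $d(\gamma) = \overline{a(\gamma)}$ reduces to $d(\gamma) = a(\gamma)$. The implication $3 \Rightarrow 1$ is exactly Lemma~\ref{corodfcriterium}, and $5 \Rightarrow 4$ is immediate.

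For $4 \Rightarrow 3$ (and analogously $3 \Rightarrow 5$), I would argue side-by-side. The sides of $\F$ and its side-pairings do not depend on the choice of center, so if $\F$ is Dirichlet both with center $i$ and with center $t_0i$, then for every side-pairing $\gamma$ the corresponding side $\Sigma_{\gamma^{-1}}$ of $\F$ is simultaneously the Poincar\'e bisector of $i,\gamma(i)$ and of $t_0i,\gamma(t_0i)$. Applying item~(4) of Lemma~\ref{dfcriterium} to $\gamma^{-1}$ then forces $a(\gamma^{-1}) = d(\gamma^{-1})$, hence $a(\gamma) = d(\gamma)$. Conversely, given (3), item~(3) of the same lemma shows that each side of $\F$ is already a bisector with respect to every $ti$, so $\F$ is Dirichlet with all such centers; this yields (5), and (4) is then trivial.

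The main new ingredient is $1 \Rightarrow 5$. Assume $\Gamma$ is the orientation-preserving subgroup of a Fuchsian reflection group $H$ containing $\sigma$, so that $[H:\Gamma] = 2$ and $H = \Gamma \sqcup \sigma\Gamma$. Let $P$ be the Dirichlet fundamental polygon of $H$ with center $i$. By the reflection-group property \cite[Exercise 7.1.1]{ratcliffe}, $P$ coincides with the Dirichlet polygon of $H$ with center $ti$ for every $t>0$. My plan is to show that, for each such $t$, the Dirichlet polygon $\F_t$ of $\Gamma$ with center $ti$ decomposes as $\F_t = P \cup \sigma(P)$; since this right-hand side is independent of $t$, every $ti$ is then a center of the same fundamental polygon, giving (5). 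Because $\sigma$ normalizes $\Gamma$ and fixes each $ti$, the identity $D_{\sigma g}(ti) = \sigma(D_g(ti))$ quickly yields $\sigma(\F_t) = \F_t$; combining this $\sigma$-symmetry with the stabilizer computation $H_{ti} = \{1,\sigma\}$ (forced by $\Gamma_{ti} = \{1\}$ and $\sigma(ti)=ti$) and the splitting $H \setminus H_{ti} = (\Gamma \setminus \{1\}) \sqcup \sigma(\Gamma \setminus \{1\})$, one obtains $\F_t = P \cup \sigma(P)$ either by direct manipulation of the defining intersections recalled in Section~\ref{secback} or by a volume comparison using $[H:\Gamma] = 2$. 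The subtlety requiring the most care is the bookkeeping around the nontrivial $H$-stabilizer of $ti$, which is easy to overlook precisely because $\Gamma_{ti}$ itself is trivial.
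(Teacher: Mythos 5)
Your proposal is correct, and on four of the five links it coincides with the paper's own proof: the equivalence of 2 and 3 via Theorem~\ref{lakeland}, the implication from 3 to 1 via Lemma~\ref{corodfcriterium}, and the equivalences of 3, 4 and 5 via items 3 and 4 of Lemma~\ref{dfcriterium} (your side-by-side argument, identifying the two bisectors through a common side of $\mathcal{F}$ and then applying item 4 of the lemma to $\gamma^{-1}$, is exactly what the paper's one-line appeal to that lemma amounts to). Where you genuinely diverge is the implication out of statement 1. The paper proves that 1 implies 4 by staying inside Lemma~\ref{dfcriterium}: it fixes a reflection polygon $P$ with the imaginary axis $\Sigma$ as a side, writes each side-pairing as $\gamma_k=\sigma\circ\sigma_{\gamma_k}$, and invokes the implication from item 2 to item 4 of that lemma, so that $P\cup\sigma(P)$ serves as a Dirichlet domain with centers $i$ and $t_0i$. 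You instead prove that 1 implies 5 by an external geometric argument: center-independence of the Dirichlet polygon of the reflection group $H$ (\cite[Exercise 7.1.1]{ratcliffe}), normality of $\Gamma$ in $H$, and the decomposition $\F_t=P\cup\sigma(P)$ deduced from $\sigma(\F_t)=\F_t$ together with the coset splitting or a volume count. Both routes work; the paper's is shorter because Lemma~\ref{dfcriterium} already encodes the needed computation, while yours makes the role of the reflection group more transparent and bypasses the factorization $\gamma_k=\sigma\circ\sigma_{\gamma_k}$. The one step in your version that needs more care than you allow is the claim $H_{ti}=\{1,\sigma\}$: the hypothesis only gives $\Gamma_i=\{1\}$, and for the discrete set of $t$ at which $ti$ is a vertex of the $H$-tessellation along $\Sigma$ the stabilizer $H_{ti}$ is dihedral and $\Gamma_{ti}$ may be nontrivial; this is repaired by continuity, or by working with $\bigcap_{g\in\Gamma\setminus\Gamma_{ti}}D_g(ti)$, but it should be said explicitly (the paper glosses over the analogous degeneracy in item 3 of Lemma~\ref{dfcriterium} as well).
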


\begin{proof}  Theorem \ref{lakeland} shows that 2 and 3 are equivalent. The equivalence  of 3, 4 and 5  is given by Lemma \ref{dfcriterium}. Moreover, by Lemma~\ref{corodfcriterium}, 3  implies 1. We show that 1  implies 4. Fix a polygon $P$ for the reflection group such that one of the sides is the imaginary axis  $\Sigma$ and denote the reflection in $\Sigma$ by $\sigma$. Let $\Sigma_k$ be a side of $P$. Denote by $\sigma_{\gamma_k}$ the reflection in $\Sigma_k$ and let $\gamma_k=\sigma\circ \sigma_{\gamma_k}$. Then the result follows from Lemma~\ref{dfcriterium}.
\end{proof}

The previous result can be generalized, by conjugating the group $\Gamma$. We then get the following result, where the Dirichlet fundamental domain has arbitrary center $P \in \HQ^2$. However, in that case, the third item has to be dropped. We also regrouped parts 4 and 5. 

\begin{corollary}\label{lakelandcor}
Let $\Gamma$ be a cofinite Fuchsian group with trivial stabilizer of $P \in \HQ^2$.  The following properties are equivalent.
\begin{enumerate}
\item $\Gamma$ is the subgroup of orientation-preserving isometries of a Fuchsian reflection group containing the reflection in the vertical line through $P$.
\item $\Gamma$ has a DF domain with center $P$.
\item $\Gamma$ has a Dirichlet fundamental domain $\mathcal{F}$ such that all the points of the geodesic through $P$ and $P+i$ are centers of $\mathcal{F}$.
\end{enumerate}
\end{corollary}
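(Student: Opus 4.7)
The plan is to reduce Corollary~\ref{lakelandcor} to Corollary~\ref{mainDFdouble} by conjugating $\Gamma$ with an element of the affine subgroup $\mathcal{A} \subset \PSL_2(\R)$ of upper-triangular matrices, exactly as was done at the start of the section to normalize the center. This is the natural move because $\mathcal{A}$ acts transitively on $\HQ^2$, fixes $\infty$, and hence intertwines the Dirichlet and the Ford constructions with themselves rather than mixing them up.

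Concretely, write $P = x + ri$ with $x \in \R$ and $r > 0$, and let $\tau \in \mathcal{A}$ be the Möbius transformation $\tau(z) = (z-x)/r$, which sends $P \mapsto i$ and carries the vertical line through $P$ onto the imaginary axis. Set $\Gamma' := \tau \Gamma \tau^{-1}$; this is again a cofinite Fuchsian group, with trivial stabilizer of $i$, and (because $\tau$ fixes $\infty$) it has a parabolic element fixing $\infty$ if and only if $\Gamma$ does. The three conditions then translate as follows. For (1), conjugation by $\tau$ sends reflections to reflections, and sends the reflection in the vertical line through $P$ precisely to the reflection $\sigma$ in the imaginary axis; hence (1) for $\Gamma$ is equivalent to the condition of Corollary~\ref{mainDFdouble}(1) for $\Gamma'$. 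For (2), if $\F$ is a fundamental domain for $\Gamma$, then $\tau(\F)$ is a fundamental domain for $\Gamma'$, and Dirichlet centers as well as isometric-sphere faces are transported equivariantly: the Dirichlet character with center $P$ becomes Dirichlet with center $i$, and since $\tau$ fixes $\infty$ it maps the isometric sphere of $\gamma$ to the isometric sphere of $\tau\gamma\tau^{-1}$, so the Ford character is preserved as well. Thus (2) for $\Gamma$ is equivalent to Corollary~\ref{mainDFdouble}(2) for $\Gamma'$. Finally, $\tau$ maps the geodesic through $P$ and $P+i$ (which is the vertical half-line $\{x+si : s>0\}$) onto the positive imaginary axis $\{ti : t>0\}$, and preserves the relation of being a Dirichlet center, so (3) for $\Gamma$ is equivalent to Corollary~\ref{mainDFdouble}(5) for $\Gamma'$.

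Applying Corollary~\ref{mainDFdouble} to $\Gamma'$ then closes the loop and yields the desired equivalences for $\Gamma$. The only point requiring care — and the step I expect to be the main (albeit mild) obstacle — is the equivariance of the Ford construction under $\tau$: one must check that a convex fundamental polygon $\F_\infty$ for the stabilizer $\Gamma_\infty$ maps to such a polygon for $\Gamma'_\infty$, and that $\tau$ maps the intersection $\bigcap_{\gamma} \ISO_\gamma^{\geq}$ to $\bigcap_{\gamma'} \ISO_{\gamma'}^{\geq}$ for $\gamma' = \tau\gamma\tau^{-1}$. Both are routine given that $\tau \in \mathcal{A}$ is a hyperbolic isometry fixing $\infty$, so I would simply record them in one line and then invoke Corollary~\ref{mainDFdouble}. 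Note that item (3) of the original Corollary~\ref{mainDFdouble} (the entrywise condition $a(\gamma)=d(\gamma)$) cannot be carried along, since that condition is tied to the specific center $i$; this is precisely the reason the generalized statement drops it.
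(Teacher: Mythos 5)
Your proposal is correct and is exactly the argument the paper intends: the paper gives no separate proof of this corollary, stating only that it follows from Corollary~\ref{mainDFdouble} ``by conjugating the group $\Gamma$'' (with the third item dropped and items 4 and 5 regrouped), which is precisely the reduction you carry out. Your explicit verification that a $\tau\in\mathcal{A}$ fixing $\infty$ transports both the Dirichlet and the Ford structure equivariantly fills in the details the paper leaves implicit.
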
 

From Corollary~\ref{TraceRealC}, it follows that that all examples given in Section VII.3 in the book of Elstrodt, Grunewald and  Mennicke \cite{elstrodt} are
groups whose Ford domain is also a Dirichlet domain. Note that this does not follow immediately from the results of \cite{lakeland}.

Hence an interesting question  is to analyse when the Bianchi groups have a DF domain. This question can be linked to the following result by Belolipetsky and Mcleod \cite[Theorem 2.1]{belimcleod}: for the ring of integers $\O$ in $\Q (\sqrt{-d})$ (with $d$ a positive square free integer), the Bianchi group $\PSL_2(\O)$ extended by two reflections is a reflection group if and only if $d \leq 19$ and $d \neq 14,17$. 
To make a link to Belilopetsky's and Mcleod's result, we next show the following lemma.

\begin{lemma}\label{belilo2}
Let $d$ be a positive square free integer and let $\O$ be the ring of integers of 
$\Q(\sqrt{-d})$. Assume $d\neq 1,3$ and  let $\Gamma$ denote  the  Bianchi Group 
$\PSL_2(\O)$. Denote by $\sigma_x$ and $\sigma_y$ the reflections in the hyperplanes 
$x=0$ and $y=0$ in $\HQ^3$, i.e. for $P=z+rj \in \HQ^3$, $\sigma_x(P)= -\overline{P}$ and $\sigma_y(P) = \overline{z}+ rj$. Suppose that $\Gamma$ has a DF 
domain and, moreover, that here is a set of side-pairing transformations $\gamma$ for $\Gamma$ that 
have lower left matrix entry real or purely imaginary. Then $\langle \Gamma, \sigma_x, 
\sigma_y \rangle$ is a reflection group.
\end{lemma}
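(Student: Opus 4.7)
The plan is to imitate Lemma~\ref{corodfcriterium}, now playing the role of the single reflection $\sigma$ with the two orthogonal reflections $\sigma_x$ and $\sigma_y$. First, Theorem~\ref{lakeland} applied to the given DF-domain $\F$ yields $d(\gamma) = \overline{a(\gamma)}$ for every side-pairing $\gamma = \begin{pmatrix} a & b \\ c & d \end{pmatrix}$. Combined with $\det \gamma = 1$, this gives $bc = |a|^2 - 1 \in \R$, and the hypothesis on $c$ then forces $b \in \R$ when $c \in \R$ and $b \in i\R$ when $c \in i\R$; the case $c = 0$ is covered by the second item of Proposition~\ref{isogammaupmodel2}.

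Next, I would verify by direct computation that for every side-pairing,
\[\gamma \;=\; \begin{cases} \sigma_x \circ \sigma_\gamma & \text{if } c(\gamma) \in \R,\\ \sigma_x \circ \sigma_y \circ \sigma_\gamma & \text{if } c(\gamma) \in i\R, \end{cases}\]
where $\sigma_\gamma$ denotes Euclidean inversion in $\Sigma_\gamma = \Iso_\gamma$. Both sides are orientation-preserving isometries of $\HQ^3$, so it is enough to check the equality on $j$ together with one generic point, using the explicit quaternionic formula for $\gamma(P)$ from Section~\ref{secback} and the fact that $\sigma_\gamma$ is inversion in the sphere of center $-d/c$ and radius $1/|c|$ furnished by Proposition~\ref{isogammaupmodel2}. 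Granting these identities, $\sigma_\gamma = \sigma_x \gamma$ or $\sigma_\gamma = \sigma_x \sigma_y \gamma$ lies in $H := \langle \Gamma, \sigma_x, \sigma_y \rangle$, so $H$ is generated by the reflections $\sigma_x$, $\sigma_y$, the $\sigma_\gamma$, together with the reflections coming from $\Gamma_\infty$.

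The final step is the three-dimensional analog of the angle argument in Lemma~\ref{corodfcriterium}. I would take $P := \F \cap \{x \geq 0, y \geq 0\}$ as the candidate fundamental polyhedron for $H$: since $\sigma_x$ and $\sigma_y$ both fix $j$ and the positive quadrant is a fundamental domain for the Klein four-group $\langle \sigma_x, \sigma_y\rangle$, $P$ is naturally the Dirichlet polyhedron of $H$ with centre $j$. To apply \cite[Theorem 7.1.3]{ratcliffe} and conclude that $H$ is a discrete reflection group with respect to $P$, one checks that every dihedral angle of $P$ is a submultiple of $\pi$: it is $\pi/2$ between $\{x=0\}$ and $\{y=0\}$; for an edge of the form $\Sigma_\gamma \cap \Sigma_{\gamma'}$, one applies \cite[Theorem 6.8.7]{ratcliffe} to the associated edge cycle of $\F$, which is halved by the $\sigma_x$- or $\sigma_y$-symmetry pairing $\Sigma_\gamma$ with $\Sigma_{\gamma^{-1}}$; and for an edge where a $\Sigma_\gamma$ meets $\{x = 0\}$ or $\{y = 0\}$, one observes that the full dihedral angle of $\F$ across the coordinate plane is twice that of $P$, again a submultiple of $\pi$ by the same cycle argument. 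The main obstacle is making this dihedral-cycle analysis fully rigorous in dimension three---especially at the mixed edges where a bisector meets a coordinate plane, and at vertices where three or more faces meet. A secondary subtlety is that when side-pairings of both types ($c \in \R$ and $c \in i\R$) are simultaneously present, $\F$ need not be symmetric under $\sigma_x$ or $\sigma_y$ individually, so one has to verify that $\F$ nevertheless carries the joint symmetry needed for $P$ to be a genuine fundamental polyhedron for $H$.
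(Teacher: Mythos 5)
Your overall strategy is the same as the paper's: factor each side-pairing as a coordinate reflection composed with the inversion in its isometric sphere, then truncate $\F$ by the quadrant $\{x\geq 0,\,y\geq 0\}$ and verify the dihedral angles via \cite[Theorems 6.8.7 and 7.1.3]{ratcliffe}. However, there are two genuine problems. First, your factorization for $c\in i\R$ is wrong: a direct computation (the one the paper carries out, using $jw=\overline{w}j$ in $\h$) gives $\gamma\sigma_\gamma(P)=-c^{-1}\overline{P}\,\overline{c}$, which is $\sigma_x(P)$ for $c\in\R$ and $\sigma_y(P)$ for $c\in i\R$; so $\gamma=\sigma_y\circ\sigma_\gamma$ in the second case, not $\sigma_x\circ\sigma_y\circ\sigma_\gamma$. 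Your version cannot be correct on orientation grounds alone: $\gamma$ is orientation-preserving while a product of three reflections/inversions is orientation-reversing. The correct identity is not mere bookkeeping, since it is exactly what pairs $\Sigma_\gamma$ with its mirror image across $y=0$ and gives the $\sigma_y$-symmetry of $\F$ (which the paper obtains from the fact that $\PSL_2(\O)$ is closed under entrywise complex conjugation, so $\begin{pmatrix}\overline{a}&\overline{b}\\\overline{c}&a\end{pmatrix}$ is again a side-pairing); this, rather than your worry about mixed types of $c$, is how the symmetry issue is resolved.

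Second, and more seriously, your candidate polyhedron $P=\F\cap\{x\geq 0,\,y\geq 0\}$ only works when $d\equiv 1,2\bmod 4$, where the vertical walls of $\F$ are $x=\pm\tfrac12$, $y=\pm\tfrac{\sqrt d}{2}$ and the quadrant cuts out a box. For $d\equiv 3\bmod 4$ the ring of integers is $\Z[\tfrac{1+\sqrt{-d}}{2}]$, the cusp cross-section of $\F$ is a hexagon, and the quadrant truncation does not produce a reflection polyhedron. The paper has to replace $\F$ by $\F\cup\theta(\F)$, with $\theta$ the translation by $\tfrac{1+\sqrt{-d}}{2}$, before intersecting with $\{0\leq \mathrm{Re}(z)\leq\tfrac12,\ 0\leq \mathrm{Im}(z)\leq\tfrac{(d+1)\sqrt d}{4d}\}$. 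Since $d=7,11,15,19$ are among the cases the lemma is meant to cover, this is a missing case, not a technicality. Two smaller points: Theorem~\ref{lakeland} assumes the stabilizer of $j$ is trivial, which fails for Bianchi groups (the stabilizer contains $\begin{pmatrix}0&1\\-1&0\end{pmatrix}$), so $d=\overline{a}$ must instead be arranged by a suitable choice of side-pairings via Proposition~\ref{isogammaupmodel2}, and the unit sphere contributed by that stabilizer must be included among the faces of the truncated polyhedron.
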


\begin{proof}
First note that the side-pairing transformations of a Dirichlet domain of center $j$ in the case of a Bianchi group are not uniquely determined, as the group has a non-trivial stabilizer of $j$, namely $\begin{pmatrix} 0 & 1\\ -1 & 0 \end{pmatrix}$. Nevertheless, if the Bianchi group $\Gamma$ has a DF domain, it is possible to choose the side-pairing transformations in such a way that the isometric sphere equals the bisector, i.e. $d=\overline{a}$ by Proposition~\ref{isogammaupmodel2}. Moreover, in this lemma we suppose the matrix entry $c$ of each side-pairing transformation to be real or purely imaginary.  

Let $\F$ be the Dirichlet domain of $\Gamma$ as described in \cite{algebrapaper}. Suppose moreover $\F$ is a DF domain and let $\gamma=\begin{pmatrix} a & b \\ c & \overline{a} \end{pmatrix}$ be a side-pairing transformation of $\F$ that does not fix $\infty$ with $c \in \R$ or $c \in i\R$. We first note that $\F$ is symmetric with respect to the hyperplane $x=0$. Indeed if  $\gamma \in \Gamma \setminus \Gamma_{\infty}$, then $\ISO_{\gamma}$ and $\ISO_{\gamma^{-1}}$ have the same radius and $\sigma_x(P_{\gamma})=P_{\gamma^{-1}}$. Moreover, if $\begin{pmatrix} a & b \\ c & \overline{a} \end{pmatrix} \in \Gamma \setminus \Gamma_{\infty}$ then also $\begin{pmatrix} \overline{a} & \overline{b} \\ \overline{c} & a \end{pmatrix} \in \Gamma \setminus \Gamma_{\infty}$. Thus we also have symmetry with respect to the hyperplane $y=0$. Let $\sigma_{\gamma}$ be the reflection in the isometric sphere $\ISO_{\gamma}$ of $\gamma$. Note that $\ISO_{\gamma}$ has center $-\frac{\overline{a}}{c}$ and radius $\frac{1}{\vert c \vert}$. We compute $\gamma\sigma_x(P)$ for $P= z+rj \in \HQ^3$. 
\begin{eqnarray*}
\gamma\sigma_{\gamma}(P) & = & \left(a \sigma_{\gamma}(P) + b\right)\left(c\sigma_{\gamma}(P) + \overline{a}\right)^{-1}\\
& = & c^{-1}\left(ac \sigma_{\gamma}(P) + bc\right)\left(c\sigma_{\gamma}(P) + \overline{a}\right)^{-1}\\
& = & c^{-1}\left(a(c\sigma_{\gamma}(P)+\overline{a})-(\vert a \vert^2-bc)\right)\left(c\sigma_{\gamma}(P) + \overline{a}\right)^{-1}\\
& = & \frac{a}{c}-c^{-1} \cdot 1 \cdot \left(c\left(-\frac{\overline{a}}{c} + \frac{P + \frac{\overline{a}}{c}}{\vert c \vert^2 \vert P + \frac{\overline{a}}{c} \vert^2}\right) + \overline{a} \right)^{-1}\\
& = & \frac{a}{c} - c^{-1}\left(\overline{c}\overline{P}+a\right)\\
& = & -c^{-1} \overline{P} \overline{c}\\
& = & -\frac{\overline{c}}{c} z + c^{-1}rj\overline{c}
\end{eqnarray*}
Suppose now that $c \in \R$ or $c \in i\R$. Then $\gamma\sigma_{\gamma}(P)=-\overline{P}$ or $\gamma\sigma_{\gamma}(P)=\overline{z}+rj$ and hence $\gamma=\sigma_x\sigma_{\gamma}$ or $\gamma=\sigma_y\sigma_{\gamma}$. We now have to distinguish two cases. Suppose first that $d \equiv 1,2 \mod 4$. Then, by \cite[Lemma 4.9]{algebrapaper}, the vertical hyperplanes of $\F$ are given by $x=\pm \frac{1}{2}$ and $y=\pm \frac{\sqrt{d}}{2}$. 
Define 
$$\tilde{\F}=\F \cap \lbrace z +rj \mid Re(z) \geq 0, Im(z) \geq 0 \rbrace.$$
Then the hyperplanes and spheres defining the border of $\tilde{\F}$ are the 
four hyperplanes given by $x=0$, $x=\frac{1}{2}$, $y=0$ and $y=\frac{\sqrt{d}}
{2}$, the spheres $\ISO_{\gamma}$ having center in $\lbrace z +rj \mid Re(z) 
\geq 0, Im(z) \geq 0 \rbrace$ and the unit sphere having center in $0$. This 
sphere comes from the unique stabilizer of $j$, given by the matrix 
$\begin{pmatrix} 0 & 1 \\ -1 & 0 \end{pmatrix}$. By the above, $\sigma_x\gamma$ or $\sigma_y\gamma$
is the reflection in $\ISO_{\gamma}$ if $\gamma$ does not fix $\infty$. If 
$\gamma$ fixes infinity then it is easy to see that $\sigma_x\gamma$ or 
$\sigma_y\gamma$ gives reflection in the vertcial hyperplanes. If 
$\gamma(j)=j$, then $\sigma_x\gamma=\sigma_y\gamma=\gamma$. As $\F$ is symmetric with respect 
to $x=0$ and $y=0$, $\tilde{\F}$ has finite volume. We now show that all 
dihedral angles are submutliples of $\pi$. Similar as in the proof of 
Lemma~\ref{corodfcriterium}, by the symmetry of $\F$ and \cite[Theorem 6.8.7]
{ratcliffe}, the dihedral angles of $\tilde{\F}$ are all submultiples of 
$\pi$. Hence, by \cite[Theorem 7.1.3]{ratcliffe}, $\langle \Gamma, \sigma_x, 
\sigma_y \rangle$ is a reflection group with respect to $\tilde{\F}$.

Suppose now that $d \equiv 3\  mod\  4$. Then, again by Lemma~\ref{corodfcriterium}, the vertical hyperplanes of $\F$ form a hexagon. This is not suited for a reflection polyhedron. Therefore we define $\tilde{\F}$ in the following way.
$$ \tilde{\F} = \left(\F \cup \theta(\F) \right) \cap \lbrace z +rj \mid 0 \leq Re(z) \leq \frac{1}{2}, 0 \leq Im(z) \leq \frac{(d+1)\sqrt{d}}{4d} \rbrace,$$
where $\theta$ is the translation given by $\begin{pmatrix} 1 & \frac{1+\sqrt{-d}}{2} \\ 0 & 1 \end{pmatrix}$. In this way, the vertical planes forming the border of $\tilde{\F}$ form a rectangle. Again, similar as above, the dihedral angles of $\tilde{\F}$ are all submultiples of $\pi$ and hence $\langle \Gamma, \sigma_x, 
\sigma_y \rangle$ is a reflection group with respect to $\tilde{\F}$.
\end{proof}

Using Aurel Page's package KleinianGroups~\cite{Aurel} and the algorithm described in \cite{algebrapaper}, we determine the side-pairing transformations for the Dirichlet fundamental domain with center $j$ for the Bianchi groups with $d \leq 19$. This yields the following result.

\begin{lemma}\label{BianchiDF}
Let $d$ and $\O$ be as in Lemma~ \ref{belilo2}. The Bianchi group $\PSL_2(\O)$ has a DF domain if and only if $d \in \lbrace 1,2,3,5,6,7,11,15,19 \rbrace$.
\end{lemma}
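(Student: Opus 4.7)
The plan is to apply Theorem~\ref{lakeland}, or rather its underlying ingredient Proposition~\ref{isogammaupmodel2}, to reduce the existence of a DF domain to an algebraic condition on side-pairing transformations: the Dirichlet fundamental polyhedron $\F$ with center $j$ is simultaneously a Ford domain precisely when a set of side-pairing transformations can be chosen so that every $\gamma=\begin{pmatrix} a & b \\ c & d\end{pmatrix}$ in this set satisfies $d=\overline{a}$ (equivalently $\Sigma_{\gamma}=\Iso_{\gamma}$). A light adaptation of Theorem~\ref{lakeland} is needed because, for a Bianchi group, the stabilizer $\Gamma_{j}$ always contains $\begin{pmatrix} 0 & 1 \\ -1 & 0\end{pmatrix}$ (and extra elements for $d=1,3$), so side-pairings are only determined modulo $\Gamma_{j}$. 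This is the observation already recorded at the start of the proof of Lemma~\ref{belilo2}: a DF structure exists if and only if one can choose representatives in each $\Gamma_{j}$-coset for which $\Iso_{\gamma}=\Sigma_{\gamma}$.

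Concretely, for each squarefree integer $d$ with $1\le d\le 19$, i.e.\ $d\in\{1,2,3,5,6,7,10,11,13,14,15,17,19\}$, I would use Aurel Page's package \texttt{KleinianGroups} together with the algorithm of \cite{algebrapaper}, which is built on the explicit bisector formulas of Proposition~\ref{isogammaupmodel}, to produce $\F$ for $\PSL_{2}(\O)$ with center $j$ together with an explicit finite list of side-pairing transformations. For the nine values $d\in\{1,2,3,5,6,7,11,15,19\}$, the computation returns, possibly after multiplying by an element of $\Gamma_{j}$, representatives satisfying $d=\overline{a}$; by Proposition~\ref{isogammaupmodel2} this gives $\Iso_{\gamma}=\Sigma_{\gamma}$ for every non-vertical side, while the vertical planes of a suitably chosen $\F_{\infty}$ automatically coincide with the corresponding planes of the Ford domain, so $\F$ is a DF domain. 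For the four remaining squarefree values $d\in\{10,13,14,17\}$, the algorithm exhibits at least one side whose associated double coset $\Gamma_{j}\gamma\Gamma_{j}$ contains no matrix with $d=\overline{a}$, ruling out a DF domain by the same proposition.

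The main obstacle is not conceptual but computational, and the one delicate point is the handling of the $\Gamma_{j}$-ambiguity in the choice of side-pairings: checking a single output representative of the algorithm is not sufficient, since an \emph{inconvenient} representative could hide an otherwise valid DF structure. For each of the four bad values $d\in\{10,13,14,17\}$, I would therefore enumerate the finitely many elements of the relevant double coset $\Gamma_{j}\gamma\Gamma_{j}$ (which is finite because $\Gamma_{j}$ is) and verify that none satisfies $d=\overline{a}$. The bulk of the argument is automated by the existing package, but producing a reproducible, explicit certificate of failure for the four bad values is what turns the computer experiment into a proof.
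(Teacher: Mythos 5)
Your proposal takes essentially the same route as the paper: the paper offers no written proof of this lemma beyond the remark that running Aurel Page's \texttt{KleinianGroups} package and the algorithm of \cite{algebrapaper} on the Dirichlet domain with center $j$ for each $d\leq 19$, and testing the criterion $d(\gamma)=\overline{a(\gamma)}$ of Proposition~\ref{isogammaupmodel2}, yields the stated list. Your additional care about the $\Gamma_j$-ambiguity of the side-pairing representatives is exactly the point the authors themselves raise at the start of the proof of Lemma~\ref{belilo2}, and your insistence on an explicit certificate of failure for $d\in\{10,13,14,17\}$ is, if anything, a more scrupulous version of what the paper does.
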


Combining Lemma~\ref{BianchiDF} and Lemma~\ref{belilo2}, we thus get a part of \cite[Theorem 2.1]{belimcleod}. Indeed for all the values of $d$ stated in Lemma~\ref{BianchiDF}, the condition that the matrix entry $c$ is real or purely imaginary is fulfilled and hence for these $d$ the Bianchi group is a subgroup of a reflection group. Note that we excluded $d=1$ and $d=3$ from Lemma~\ref{belilo2} because in that case the stabilizer of $j$ is more complicated. Nevertheless it can be easily verified, that also in these two cases, the Bianchi group $\PSL_2(\O)$ is a subgroup of index 4 of a reflection group This coincides in fact with a much earlier result of Bianchi. In \cite{bianchi}, Bianchi showed already that the Bianchi groups extended by two reflections are reflection groups for $d \leq 19$ and $d \neq 14,17$. For $d=10$ and $d=13$, Bianchi states that the group is generated by a so-called improper reflection, see \cite[Paragraph 17]{bianchi}. This is reflected in Lemma~\ref{BianchiDF}, which shows that the Bianchi group does not have a DF domain for these values of $d$.  \\

{\bf Acknowledgment.} The authors would like to thank the referee for pointing out the link between our results and \cite{belimcleod}. Moreover we thank Aurel Page for its help with the KleinianGroups package.

\bibliographystyle{abbrv}
\bibliography{KiJeJuSiSGEOBiblio}

\vspace{.25cm}

\noindent Department of Mathematics, \newline Vrije
Universiteit Brussel,\newline Pleinlaan 2, 1050
Brussel, Belgium\newline
emails: efjesper@vub.ac.be and akiefer@vub.ac.be

\vspace{.25cm}

\noindent Instituto de Matem\'atica e Estat\'\i
stica,\newline Universidade de S\~ao Paulo
(IME-USP),\newline Caixa Postal 66281, S\~ao
Paulo,\newline CEP  05315-970 - Brasil \newline
email: ostanley@usp.br

\vspace{.25cm}

\noindent Departamento de Matematica\\
Universidade Federal da Paraiba\\
e-mail: andrade@mat.ufpb.br

\vspace{.25cm}

\noindent Escola de Artes, Ci\^encias e
Humanidades,\newline Universidade de S\~ao Paulo
(EACH-USP),\newline Rua Arlindo B\'ettio, 1000,
Ermelindo Matarazzo, S\~ao Paulo, \newline CEP
03828-000 - Brasil \newline email:
acsouzafilho@usp.br
\end{document}